\begin{document}
\newcommand\Mand{\ \text{and}\ }
\newcommand\Mor{\ \text{or}\ }
\newcommand\Mfor{\ \text{for}\ }
\newcommand\Real{\mathbb{R}}
\newcommand\RR{\mathbb{R}}
\newcommand\im{\operatorname{Im}}
\newcommand\re{\operatorname{Re}}
\newcommand\sign{\operatorname{sign}}
\newcommand\sphere{\mathbb{S}}
\newcommand\BB{\mathbb{B}}
\newcommand\HH{\mathbb{H}}
\newcommand\dS{\mathrm{dS}}
\newcommand\ZZ{\mathbb{Z}}
\newcommand\codim{\operatorname{codim}}
\newcommand\Sym{\operatorname{Sym}}
\newcommand\End{\operatorname{End}}
\newcommand\Span{\operatorname{span}}
\newcommand\Ran{\operatorname{Ran}}
\newcommand\ep{\epsilon}
\newcommand\Cinf{\cC^\infty}
\newcommand\dCinf{\dot \cC^\infty}
\newcommand\CI{\cC^\infty}
\newcommand\dCI{\dot \cC^\infty}
\newcommand\Cx{\mathbb{C}}
\newcommand\Nat{\mathbb{N}}
\newcommand\dist{\cC^{-\infty}}
\newcommand\ddist{\dot \cC^{-\infty}}
\newcommand\pa{\partial}
\newcommand\Card{\mathrm{Card}}
\renewcommand\Box{{\square}}
\newcommand\Ell{\mathrm{Ell}}
\newcommand\WF{\mathrm{WF}}
\newcommand\WFh{\mathrm{WF}_\semi}
\newcommand\WFb{\mathrm{WF}_\bl}
\newcommand\Vf{\mathcal{V}}
\newcommand\Vb{\mathcal{V}_\bl}
\newcommand\Vz{\mathcal{V}_0}
\newcommand\Hb{H_{\bl}}
\newcommand\Ker{\mathrm{Ker}}
\newcommand\Range{\mathrm{Ran}}
\newcommand\Hom{\mathrm{Hom}}
\newcommand\Id{\mathrm{Id}}
\newcommand\sgn{\operatorname{sgn}}
\newcommand\ff{\mathrm{ff}}
\newcommand\tf{\mathrm{tf}}
\newcommand\esssupp{\operatorname{esssupp}}
\newcommand\supp{\operatorname{supp}}
\newcommand\vol{\mathrm{vol}}
\newcommand\Diff{\mathrm{Diff}}
\newcommand\Diffd{\mathrm{Diff}_{\dagger}}
\newcommand\Diffs{\mathrm{Diff}_{\sharp}}
\newcommand\Diffb{\mathrm{Diff}_\bl}
\newcommand\DiffbI{\mathrm{Diff}_{\bl,I}}
\newcommand\Diffbeven{\mathrm{Diff}_{\bl,\even}}
\newcommand\Diffz{\mathrm{Diff}_0}
\newcommand\Psih{\Psi_{\semi}}
\newcommand\Psihcl{\Psi_{\semi,\cl}}
\newcommand\Psib{\Psi_\bl}
\newcommand\Psibc{\Psi_{\mathrm{bc}}}
\newcommand\TbC{{}^{\bl,\Cx} T}
\newcommand\Tb{{}^{\bl} T}
\newcommand\Sb{{}^{\bl} S}
\newcommand\Lambdab{{}^{\bl} \Lambda}
\newcommand\zT{{}^{0} T}
\newcommand\Tz{{}^{0} T}
\newcommand\zS{{}^{0} S}
\newcommand\dom{\mathcal{D}}
\newcommand\cA{\mathcal{A}}
\newcommand\cB{\mathcal{B}}
\newcommand\cE{\mathcal{E}}
\newcommand\cG{\mathcal{G}}
\newcommand\cH{\mathcal{H}}
\newcommand\cU{\mathcal{U}}
\newcommand\cO{\mathcal{O}}
\newcommand\cF{\mathcal{F}}
\newcommand\cM{\mathcal{M}}
\newcommand\cQ{\mathcal{Q}}
\newcommand\cR{\mathcal{R}}
\newcommand\cI{\mathcal{I}}
\newcommand\cL{\mathcal{L}}
\newcommand\cK{\mathcal{K}}
\newcommand\cC{\mathcal{C}}
\newcommand\cX{\mathcal{X}}
\newcommand\cY{\mathcal{Y}}
\newcommand\cP{\mathcal{P}}
\newcommand\cS{\mathcal{S}}
\newcommand\cZ{\mathcal{Z}}
\newcommand\cW{\mathcal{W}}
\newcommand\Ptil{\tilde P}
\newcommand\ptil{\tilde p}
\newcommand\chit{\tilde \chi}
\newcommand\yt{\tilde y}
\newcommand\zetat{\tilde \zeta}
\newcommand\xit{\tilde \xi}
\newcommand\taut{{\tilde \tau}}
\newcommand\phit{{\tilde \phi}}
\newcommand\mut{{\tilde \mu}}
\newcommand\sigmat{{\tilde \sigma}}
\newcommand\sigmah{\hat\sigma}
\newcommand\zetah{\hat\zeta}
\newcommand\etah{\hat\eta}
\newcommand\loc{\mathrm{loc}}
\newcommand\compl{\mathrm{comp}}
\newcommand\reg{\mathrm{reg}}
\newcommand\GBB{\textsf{GBB}}
\newcommand\GBBsp{\textsf{GBB}\ }
\newcommand\bl{{\mathrm b}}
\newcommand{\sH}{\mathsf{H}}
\newcommand{\cte}{\digamma}
\newcommand\cl{\operatorname{cl}}
\newcommand\hsf{\mathcal{S}}
\newcommand\Div{\operatorname{div}}
\newcommand\hilbert{\mathfrak{X}}

\newcommand\Hh{H_{\semi}}

\newcommand\bM{\bar M}
\newcommand\Xext{X_{-\delta_0}}

\newcommand\xib{{\underline{\xi}}}
\newcommand\etab{{\underline{\eta}}}
\newcommand\zetab{{\underline{\zeta}}}

\newcommand\xibh{{\underline{\hat \xi}}}
\newcommand\etabh{{\underline{\hat \eta}}}
\newcommand\zetabh{{\underline{\hat \zeta}}}

\newcommand\zn{z}
\newcommand\sigman{\sigma}
\newcommand\psit{\tilde\psi}
\newcommand\rhot{{\tilde\rho}}

\newcommand\hM{\hat M}

\newcommand\Op{\operatorname{Op}}
\newcommand\Oph{\operatorname{Op_{\semi}}}

\newcommand\innr{{\mathrm{inner}}}
\newcommand\outr{{\mathrm{outer}}}
\newcommand\full{{\mathrm{full}}}
\newcommand\semi{\hbar}

\newcommand\elliptic{\mathrm{ell}}
\newcommand\diffordgen{k}
\newcommand\difford{2}
\newcommand\diffordm{1}
\newcommand\diffordmpar{1}
\newcommand\even{\mathrm{even}}
\newcommand\dimn{n}
\newcommand\dimnpar{n}
\newcommand\dimnm{n-1}
\newcommand\dimnp{n+1}
\newcommand\dimnppar{(n+1)}
\newcommand\dimnppp{n+3}
\newcommand\dimnppppar{n+3}

\setcounter{secnumdepth}{3}
\newtheorem{lemma}{Lemma}[section]
\newtheorem{prop}[lemma]{Proposition}
\newtheorem{thm}[lemma]{Theorem}
\newtheorem{cor}[lemma]{Corollary}
\newtheorem{result}[lemma]{Result}
\newtheorem*{thm*}{Theorem}
\newtheorem*{prop*}{Proposition}
\newtheorem*{cor*}{Corollary}
\newtheorem*{conj*}{Conjecture}
\numberwithin{equation}{section}
\theoremstyle{remark}
\newtheorem{rem}[lemma]{Remark}
\newtheorem*{rem*}{Remark}
\theoremstyle{definition}
\newtheorem{Def}[lemma]{Definition}
\newtheorem*{Def*}{Definition}

\newcommand{\mar}[1]{{\marginpar{\sffamily{\scriptsize #1}}}}
\newcommand\av[1]{\mar{AV:#1}}

\renewcommand{\theenumi}{\roman{enumi}}
\renewcommand{\labelenumi}{(\theenumi)}

\title[Analytic continuation for differential forms]{Analytic
  continuation and high energy estimates for the resolvent of the Laplacian on forms on
asymptotically hyperbolic spaces}
\author[Andras Vasy]{Andr\'as Vasy}
\address{Department of Mathematics, Stanford University, CA 94305-2125, USA}

\email{andras@math.stanford.edu}

\subjclass[2000]{Primary 58J50; Secondary 35P25, 35L05, 58J47}

\date{June 23, 2012.}
\thanks{The author gratefully
  acknowledges partial support from the NSF under grant number  
  DMS-1068742.}

\begin{abstract}
We show the analytic continuation of the resolvent of the Laplacian on
asymptotically hyperbolic spaces on differential forms, including high
energy estimates in strips. This is achieved by placing the spectral
family of the Laplacian
within the framework developed, and applied to
scalar problems, by the author recently,
roughly by extending the problem across the boundary
of the compactification of the asymptotically hyperbolic space in a
suitable manner. The main novelty is that the non-scalar nature of the
operator is dealt with by relating it to a problem on an asymptotically
Minkowski space to motivate the choice of the extension across the conformal boundary.
\end{abstract}

\maketitle

\section{Introduction}
Suppose that $(X,g)$ is an $n$-dimensional asymptotically hyperbolic
space with an even metric in the sense of Guillarmou
\cite{Guillarmou:Meromorphic}. That is, $g$ is Riemannian on $X$, $X$
has a compactification $\overline{X}$ with boundary defining function
$x$,
and there is a neighborhood $U=[0,\ep)_x\times \pa X$ of $\pa X$ on
which $g$
is of the warped product form $\frac{dx^2+h}{x^2}$, with $h=h(x,.)$ a
smooth family of symmetric 2-cotensors on $\pa X$ whose Taylor series
at $x=0$ is even, and $h(0,.)$ is positive definite. We refer to
\cite{Guillarmou:Meromorphic} for a more geometric version, and to
Graham and Lee \cite[Section~5]{Graham-Lee:Einstein} for how to put an arbitrary
asymptotically hyperbolic metric, i.e. one for which $x^2g$ is
Riemannian on $\overline{X}$ and $|dx|_{x^2g}=1$ at $x=0$,
into a warped product form. We write
$\overline{X}_{\even}$ for $\overline{X}$ equipped with the even
smooth structure, i.e.\ using coordinate charts $[0,\ep^2)_\mu\times O$, $O$ a
coordinate chart in $\pa X$, in the product decomposition above, where
$\mu=x^2$. (So a $\CI$ function on $\overline X$ is in
$\CI(\overline{X}_{\even})$ if and only if its Taylor series has only
even terms at $x=0$.)

Let $\Delta_k$ denote the Laplacian on
$k$-forms on the complete Riemannian manifold $(X,g)$. Thus,
$\Delta_k$ with domain $\CI_c(X;\Lambda^k X)$ is essentially
self-adjoint, and is indeed non-negative,
so in particular $(\Delta_k-\lambda)^{-1}$ exists for
$\lambda\in\Cx\setminus[0,\infty)$. We show that

\begin{thm}\label{thm:main}
The operators
$$
\delta d(\Delta_k-\sigma^2-(n-2k-1)^2/4)^{-1},
\ d\delta (\Delta_k-\sigma^2-(n-2k+1)^2/4)^{-1}
$$
have a meromorphic continuation from $\im\sigma\gg 1$ to $\Cx$ with
finite rank poles and with non-trapping, resp.\ mildly trapping,
high energy estimates in strips
$|\im\sigma|<C$ if $g$ is a non-trapping, resp.\ mildly trapping,
metric.
\end{thm}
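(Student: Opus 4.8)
The plan is to bring $\Delta_k$ into the microlocal Fredholm framework the author developed for scalar resolvents on asymptotically hyperbolic spaces; the only genuinely new input is the choice of the vector bundle and of the extended operator across $\mu=0$. First I would work in the collar $U=[0,\ep)_x\times\pa X$, writing a $k$-form as $\alpha=\frac{dx}{x}\wedge\beta+\gamma$ with $\beta$ an $x$-dependent $(k-1)$-form and $\gamma$ an $x$-dependent $k$-form on $\pa X$, and compute $\Delta_k=\delta d+d\delta$ for $g=\frac{dx^2+h}{x^2}$ in this splitting. After conjugating by the appropriate power of $x$ and passing to the even variable $\mu=x^2$, the two diagonal blocks (acting on $\gamma$ and on $\beta$) are scalar principal-type operators whose indicial roots differ precisely by the shift in the statement: the coexact $k$-form sector carries the weight producing $\sigma^2+(n-2k-1)^2/4$, while the exact $k$-form sector --- which under $d$ corresponds to the coexact $(k-1)$-form sector --- carries the weight producing $\sigma^2+(n-2k+1)^2/4$, the off-diagonal coupling coming from $d_{\pa X},\delta_{\pa X}$ being of lower order relative to the diagonal. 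Since $\delta d$ annihilates exact forms and $d\delta$ annihilates coclosed forms, and since $d\Delta_{k-1}=\Delta_k d$, the operators $\delta d(\Delta_k-\sigma^2-(n-2k-1)^2/4)^{-1}$ and $d\delta(\Delta_k-\sigma^2-(n-2k+1)^2/4)^{-1}$ each involve a single, internally consistent weight; this is why the two shifts appear and why one continues these composed operators rather than the bare resolvent.

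To make the extension across $\mu=0$ canonical I would recognize the block structure above as the restriction of the form d'Alembertian on an asymptotically Minkowski space: hyperbolic space embeds there as a spacelike hypersurface, a $k$-form on the ambient space restricts to a $k$-form plus --- via contraction with the timelike normal --- a $(k-1)$-form on the hypersurface, and $\Box$ respects exactly the tangential/normal decomposition used above. This model dictates the bundle $\cE$ over the extended space $\overline{X}_{\even}$ (enlarged across $\mu=0$, and capped off on the de Sitter side), the precise first-order terms of the extended family $L(\sigma)\in\Diff^2$ acting on $\cE$, and the Lorentzian behavior for $\mu<0$. It also shows that $L(\sigma)$ has scalar principal symbol, so its null-bicharacteristic flow is, away from the geodesic trapped set, conjugate to the geodesic flow; that it has a nondegenerate radial set over $\mu=0$; and, crucially, that the matrix of first-order data at the radial set has, on each of the two sectors, precisely the eigenvalue forced by the chosen weight, so that the threshold conditions governing the radial-point estimates are satisfied.

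With $L(\sigma)$ so constructed I would run the standard machinery: adjoin a complex absorbing operator $Q$ supported in $\mu<0$ (equivalently, work on the doubled/de Sitter cap), and prove elliptic estimates, real principal-type propagation along the null-bicharacteristics, and radial-point estimates at $\mu=0$, in suitable variable-order or weighted Sobolev spaces $\cX^s\to\cY^{s-1}$ on sections of $\cE$ --- the scalar estimates go through verbatim for systems with scalar principal symbol once the threshold condition is imposed on the first-order matrix. This gives that $L(\sigma)-iQ$ is an analytic Fredholm family; invertibility for $\im\sigma\gg1$ follows by matching with the actual $L^2$-resolvent, so analytic Fredholm theory yields meromorphic continuation to $\Cx$ with finite-rank poles, and restriction to $\mu>0$ followed by undoing the conjugation produces the stated continuation of the two composed operators. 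For the high energy estimates I would rescale semiclassically, $\semi=|\re\sigma|^{-1}$, $z=\semi\sigma$, and repeat all of the above estimates semiclassically; because the principal symbol is scalar, the semiclassical null-bicharacteristic flow near the relevant energy surface is conjugate to the geodesic flow of $g$, so the non-trapping hypothesis gives polynomially bounded estimates of non-trapping type in the strips $|\im\sigma|<C$, and the mildly trapping hypothesis feeds into the normally hyperbolic trapped-set estimates of Wunsch--Zworski type to give the corresponding mildly trapping bounds.

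The routine parts are the Fredholm and continuation argument and the semiclassical propagation, which are essentially unchanged from the scalar case once the operator is in hand. The main obstacle is the second step: extracting from the asymptotically Minkowski model the correct bundle $\cE$ and first-order part of $L(\sigma)$, and then verifying --- through the explicit Hodge-Laplacian computation in the warped-product coordinates --- that the now matrix-valued first-order data at the radial set satisfy the threshold inequalities after the split into the $\delta d$ and $d\delta$ pieces; in particular one must see the indicial roots $(n-2k\mp1)^2/4$ emerge cleanly and confirm that no resonance among the eigenvalues of the coupling matrix obstructs the radial estimates.
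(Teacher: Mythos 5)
Your proposal follows essentially the same route as the paper: decompose $k$-forms near $\pa X$ into tangential and normal parts, recognize the resulting $2\times 2$ system as the Mellin transform of the form d'Alembertian on an ambient Lorentzian cone (the Cheeger-type conic computation of Section~\ref{sec:Minkowski}), extend across $\mu=0$ into a Lorentzian region, and run the scalar-principal-symbol Fredholm, radial-point, and complex-absorption machinery of the author's earlier scalar papers, with semiclassical rescaling for the high-energy strip estimates. Your key observation that $\delta d$ and $d\delta$ block-diagonalize the off-diagonal $-2d_X,\,2\delta_X$ coupling (since they annihilate closed and coclosed forms, respectively) is precisely the commutation argument underlying Lemma~\ref{lemma:global-to-local-relation}, and your account of the two shifted spectral parameters matches the paper's $k\mapsto k+1$ substitution in the normal ($(k-1)$-form) block.
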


Here recall that $g$ non-trapping means that all geodesics approach
$\pa X$ as the time parameter goes to $\pm\infty$, while mildly
trapping, defined in \cite[Section~2]{Vasy-Dyatlov:Microlocal-Kerr},
is an analytic assumption on a model problem near the trapping
(roughly polynomial bounds for the model resolvent) and the nearby
bicharacteristic flow; we recall this briefly at the end of
Section~\ref{sec:analysis}. Non-trapping high-energy estimates mean that for all
$C_0>0$ and $s$ with $s+3/2>C_0$ there is $C>0$ and $R>0$ such that
\begin{equation}\begin{aligned}\label{eq:high-energy-est}
&\|\delta d(\Delta_k-\sigma^2-(n-2k-1)^2/4)^{-1}\|_{\cL(\cY_{\delta
    d}^{s+1},\cX_{\delta d}^{s})}\leq C|\sigma|,\\
&\|d\delta (\Delta_k-\sigma^2-(n-2k+1)^2/4)^{-1}\|_{\cL(\cY_{d\delta}^{s+1},\cX_{d\delta}^{s})}\leq C|\sigma|,\\
&|\im\sigma|<C_0,\  |\re\sigma|>R,
\end{aligned}\end{equation}
where the norms are on suitable (high-energy) Sobolev spaces, namely
\begin{equation*}\begin{aligned}
&\cX_{\delta
d}^s=x^{-\imath\sigma+(n-2k-1)/2}H^s_{|\sigma|^{-1}}(\overline{X}_{\even};\Lambda^k
\overline{X}_{\even}),\\
&\cY_{\delta
d}^{s+1}=
x^{-\imath\sigma+(n-2k-1)/2+2}H^{s+1}_{|\sigma|^{-1}}(\overline{X}_{\even};\Lambda^k
\overline{X}_{\even}),\\
&\cX_{d \delta}^s=\big\{u\in\dist(X):\ x^{\imath\sigma-(n-2k-3)/2}u\in
H^{s+1}_{|\sigma|^{-1}}(\overline{X}_{\even};\Lambda^k\overline{X}_{\even}),\\
&\qquad\qquad\qquad x^{\imath\sigma-(n-2k-3)/2-2}d\mu\wedge u\in
H^{s+1}_{|\sigma|^{-1}}(\overline{X}_{\even};\Lambda^k\overline{X}_{\even})\big\}\\
&\qquad\subset
x^{-\imath\sigma+(n-2k-3)/2}H^s_{|\sigma|^{-1}}(\overline{X}_{\even};\Lambda^k
\overline{X}_{\even}),\\
&\cY_{d\delta}^{s+1}=
\big\{f\in\dist(X):\ x^{\imath\sigma-(n-2k-3)/2-2}f\in
H^{s+1}_{|\sigma|^{-1}}(\overline{X}_{\even};\Lambda^k\overline{X}_{\even}),\\
&\qquad\qquad\qquad x^{\imath\sigma-(n-2k-3)/2-4}d\mu\wedge f\in
H^{s+1}_{|\sigma|^{-1}}(\overline{X}_{\even};\Lambda^k\overline{X}_{\even})\big\}\\
&\qquad\supset
x^{-\imath\sigma+(n-2k-3)/2+4}H^{s+1}_{|\sigma|^{-1}}(\overline{X}_{\even};\Lambda^k
\overline{X}_{\even}).\\
\end{aligned}\end{equation*}
Here the
power of $|\sigma|$ on the
right hand side of \eqref{eq:high-energy-est} is $1$ rather than $-1$ due to the presence of
$d\delta$ and $\delta d$ on the left hand side which are $|\sigma|^2$
times
second order semiclassical differential operators, as recalled below.
Mildly trapping estimates mean that $|\sigma|$ is replaced by
$|\sigma|^{\varkappa+1}$ for a $\varkappa>0$ arising from the
polynomial models on the trapped model. Notice that as $\re\sigma$ is
assumed sufficiently large, the thresholds $(n-2k\pm 1)^2/4$ are
irrelevant in these estimates. Recall also briefly that on a compact
manifold, possibly with boundary, the semiclassical Sobolev spaces are
$L^2$-based Sobolev spaces in which each derivative is weighted with
$|\sigma|^{-1}$, $|\sigma|\geq 1$. In particular
$|\sigma|^{-2}\delta d$, $|\sigma|^{-2}d \delta$ are second order
semiclassical operators.

Denoting the Hodge star operator on $X$ by $*$, and adding a subscript
to the form spaces to denote the form degree, it is straightforward
to check that $*:\cX_{\delta d,k}^s\to\cX_{d\delta,n-k}^s$ and
$*:\cY_{\delta d,k}^{s+1}\to\cY_{d\delta,n-k}^{s+1}$ are isomorphisms,
so the estimates corresponding to coexact and exact forms indeed match
up. Note that under the mapping $k\mapsto n-k$, the threshold
$(n-2k+1)^2/4$ becomes $(n-2k-1)^2/4$.

We also mention that when one only wants to estimate the operators in
Theorem~\ref{thm:main} away from $\pa X$, one can use semiclassical
elliptic regularity to make the differential order of the domain and
target spaces equal. There is a real loss at $\pa X$ in terms of
standard $\overline{X}_{\even}$-derivatives since the operator which
plays a crucial role in our analysis, on an extended space $\tilde X$, ceases to
be elliptic there.

Noting that
$$
\lambda(\Delta_k-\lambda)^{-1}=-\Id+\delta d(\Delta_k-\lambda)^{-1}+d\delta (\Delta_k-\lambda)^{-1},
$$
and noting that strips
$$
|\im\sqrt{\lambda-(n-2k\pm 1)^2/4}|<C
$$
are
comparable (i.e.\ are contained within each other up to changing $C$
by an arbitrarily small amount) as $|\re\sqrt{\lambda-(n-2k\pm 1)^2/4}|\to +\infty$, we
deduce that

\begin{cor}\label{cor:main}
Let $\Sigma$ be the Riemann surface of the functions
$$
\lambda\mapsto \sqrt{\lambda-(n-2k-1)^2/4},\ \lambda\mapsto \sqrt{\lambda-(n-2k+1)^2/4};
$$
thus $\lambda\mapsto\lambda$ defined on $\Cx\setminus[0,\infty)$
extends to a holomorphic function $\varpi$ on $\Sigma$ (cf.\ \cite[p.~722]{Carron-Pedon:Differential}).

The operator family
$$
\lambda\mapsto (\Delta_k-\lambda)^{-1},
$$
has a meromorphic continuation from $\Cx\setminus[0,\infty)$ to the
Riemann surface $\Sigma$
with
finite rank poles apart from a possible infinite rank pole at the
zeros of $\varpi$ (thus including $\lambda=0$),
and with non-trapping, resp.\ mildly trapping,
high energy estimates in strips
$$
|\im\sqrt{\lambda-(n-2k-1)^2/4}|<C
$$
if $g$ is a non-trapping, resp.\ mildly trapping,
metric.
\end{cor}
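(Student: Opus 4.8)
The plan is to derive Corollary~\ref{cor:main} as an essentially formal consequence of Theorem~\ref{thm:main} together with the Hodge decomposition $\Delta_k=\delta d+d\delta$, which gives the operator identity
\begin{equation*}
\lambda(\Delta_k-\lambda)^{-1}=-\Id+\delta d(\Delta_k-\lambda)^{-1}+d\delta(\Delta_k-\lambda)^{-1}
\end{equation*}
recorded above. I would first fix, on $\Sigma$, the holomorphic functions $\sigma=\sqrt{\lambda-(n-2k-1)^2/4}$ and $\sigmat=\sqrt{\lambda-(n-2k+1)^2/4}$ --- these exist precisely because $\Sigma$ is defined as the Riemann surface on which both square roots are single-valued --- along with the holomorphic function $\varpi$ extending $\lambda$, so that on $\Sigma$ one has $\Delta_k-\varpi=\Delta_k-\sigma^2-(n-2k-1)^2/4=\Delta_k-\sigmat^2-(n-2k+1)^2/4$. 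By Theorem~\ref{thm:main}, $\sigma\mapsto\delta d(\Delta_k-\sigma^2-(n-2k-1)^2/4)^{-1}$ continues meromorphically from $\{\im\sigma\gg1\}$ to $\Cx$ with finite rank poles, and likewise $\sigmat\mapsto d\delta(\Delta_k-\sigmat^2-(n-2k+1)^2/4)^{-1}$; pulling these back along $\sigma,\sigmat\colon\Sigma\to\Cx$ gives operator families $A_1,A_2$ on $\Sigma$. Here one must note that near the branch point of $\Sigma$ over $\lambda=(n-2k-1)^2/4$ the function $\sigma$ is a genuine local coordinate --- the other square root being holomorphic and, generically, nonvanishing there --- and symmetrically for $\sigmat$, so that meromorphy in $\sigma$, resp.\ $\sigmat$, descends to meromorphy on $\Sigma$ with finite rank poles.

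Next I would note that over the physical region $\Cx\setminus[0,\infty)$, where $(\Delta_k-\lambda)^{-1}$ genuinely exists and $\sigma,\sigmat$ are the principal branches, $A_1,A_2$ agree with $\delta d(\Delta_k-\lambda)^{-1}$ and $d\delta(\Delta_k-\lambda)^{-1}$, so by uniqueness of meromorphic continuation they are \emph{the} continuations. Setting
\begin{equation*}
R=\varpi^{-1}\bigl(-\Id+A_1+A_2\bigr)
\end{equation*}
on $\Sigma\setminus\varpi^{-1}(0)$, the identity above shows $R=(\Delta_k-\lambda)^{-1}$ over $\Cx\setminus[0,\infty)$, so $R$ is the sought meromorphic continuation. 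Away from $\varpi^{-1}(0)$ its poles are those of $A_1,A_2$, hence of finite rank; at the finitely many points of $\Sigma$ lying over $\lambda=0$, i.e.\ the zeros of $\varpi$, the factor $\varpi^{-1}$ applied to $-\Id$ --- together with the possible blow-up of $A_1,A_2$ --- permits a pole of infinite rank, as asserted.

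For the high energy estimates, fix the strip $|\im\sigma|<C$, i.e.\ $|\im\sqrt{\lambda-(n-2k-1)^2/4}|<C$. Theorem~\ref{thm:main} bounds $A_1$ there, for $|\re\sigma|$ large, by $C'|\sigma|$, resp.\ $C'|\sigma|^{\varkappa+1}$. Since $\sigmat^2-\sigma^2$ is a constant, on each sheet $\sigmat=\pm\sigma+O(|\sigma|^{-1})$ as $|\sigma|\to\infty$, so $|\sigmat|/|\sigma|\to1$ and $|\im\sigmat|-|\im\sigma|\to0$; hence the strips $|\im\sigma|<C$ and $|\im\sigmat|<C$ are comparable at high energy in the sense noted before the statement, and the bound on $A_2$ valid in $|\im\sigmat|<C$ transfers to $|\im\sigma|<C$ with $|\sigmat|\sim|\sigma|$. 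Finally $|\varpi|=|\sigma^2+(n-2k-1)^2/4|\gtrsim|\sigma|^2$ on the strip for $|\re\sigma|$ large, so the scalar factor $\varpi^{-1}$ contributes a gain of $|\sigma|^{-2}$, while the identity operator is uniformly bounded between the relevant spaces --- the $\sigma$-dependent weight $x^{-\imath\sigma}$ cancelling and the remaining weights differing only by the bounded smooth factor $x^2$ and one derivative in the semiclassical Sobolev spaces; combining these, all three terms of $R$ are controlled on the corresponding intersections of the spaces of Theorem~\ref{thm:main}, which yields the non-trapping, resp.\ mildly trapping, high energy estimate for $(\Delta_k-\lambda)^{-1}$. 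I do not expect a deep obstacle: the only points really needing care are the local structure of $\Sigma$ at its branch points and over $\lambda=0$, so that meromorphy ``in $\sigma$'' descends correctly, and the high-energy comparability of the $\sigma$- and $\sigmat$-strips --- everything else being the bookkeeping of weighted semiclassical spaces through the Hodge identity.
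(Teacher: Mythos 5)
Your argument is exactly the paper's: the identity $\lambda(\Delta_k-\lambda)^{-1}=-\Id+\delta d(\Delta_k-\lambda)^{-1}+d\delta(\Delta_k-\lambda)^{-1}$, Theorem~\ref{thm:main} applied to each summand, and the high-energy comparability of the strips $|\im\sqrt{\lambda-(n-2k\pm1)^2/4}|<C$ are precisely the three ingredients the author invokes in the two sentences immediately preceding the corollary. Your write-up is in fact more explicit than the paper's about descent of meromorphy to $\Sigma$ and about the book-keeping of the $\varpi^{-1}$ factor, but it is the same route.
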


An analogous theorem on functions, without high energy estimates, is due to Mazzeo and Melrose
\cite{Mazzeo-Melrose:Meromorphic} and Guillarmou
\cite{Guillarmou:Meromorphic}, using the 0-calculus of Mazzeo and
Melrose. A different proof, with high energy estimates, was provided by the author in
\cite{Vasy-Dyatlov:Microlocal-Kerr} and \cite{Vasy:Microlocal-AH}. Also,
an analogous theorem (without high energy estimates)
for the Dirac operator on a conformally compact
spin manifold using the 0-calculus was proved by Guillarmou, Moroianu
and Park \cite{Guillarmou-Moroianu-Park:Eta}. The $L^2$-Hodge theory
was described by Mazzeo in \cite{Mazzeo:Hodge}, again using the
0-calculus. In the context of actual hyperbolic manifolds, i.e.\
quotients of real hyperbolic space (as well as complex and
quaternionic hyperbolic spaces) the resolvent was constructed by
Carron and Pedon \cite{Carron-Pedon:Differential} using explicit
formulae for exact hyperbolic space; this followed the much earlier
results of Donnelly \cite{Donnelly:Differential} identifying the hyperbolic
Laplacian up to unitary equivalence. In the more general
asymptotically hyperbolic setting Kantor \cite{Kantor:Eleven} has
obtained an analytic continuation (without high energy estimates)
except in middle degree using the
0-calculus and Pedon's explicit results, in part based on some notes
provided by Guillarmou on the model case.

This theorem is proved by `conjugating', or more precisely
appropriately modifying, the Laplacian on differential
forms to an operator which has a continuation across the boundary, as
was done in the scalar setting by the author in
\cite{Vasy-Dyatlov:Microlocal-Kerr} and
\cite{Vasy:Microlocal-AH}. However, here we emphasize an `ambient
space' point of view, which, while by no means necessary, is very
enlightening; it uses a one higher dimensional (Minkowski type)
Lorentzian manifold to perform this continuation across the boundary.
Ambient metric constructions in conformal geometry (relating the
`bulk' and the asymptotically hyperbolic boundary) were
introduced by Fefferman and Graham \cite{Fefferman-Graham:Conformal},
see \cite{Graham-Hirachi:Inhomogeneous} for a recent treatment, but
there Ricci flatness was an important consideration, while here this
plays no role, instead merely the $\RR^+$-equivariance is relevant. We
also refer to the recent monograph by Fefferman and Graham \cite{Fefferman-Graham:Ambient} for a more
thorough treatment, including what they call `pre-ambient metrics'
(without a Ricci condition). There is also the very recent work of
Gover, Latini and Waldron \cite{Gover-Latini-Waldron:Poincare} using
the tractor calculus to analyze the geometric connection between asymptotically
hyperbolic and ambient frameworks on differential forms.

The operator obtained in this extension process is an operator $P_\sigma$ acting
on two copies of the form bundle. In order to explain how this arises,
and to motivate the subsequent constructions, we start by considering the d'Alembertian $\Box_{\tilde g}$ on Minkowski space $(\RR^{n+1},\tilde
g)$ and the hyperbolic Laplacian in the next section,
and then finally extending the results to
general $X$ in the Section~\ref{sec:conformal}. The analytic background
is recalled in Section~\ref{sec:analysis}. This is merely a summary of
the relevant parts of \cite{Vasy:Microlocal-AH} and
\cite{Vasy-Dyatlov:Microlocal-Kerr} since no new analytic tools are
required; the set-up in these papers was such that it included
non-scalar operators with scalar principal symbols, which the
Laplacian on forms possesses.

There are no infinite rank poles at the thresholds on functions or top
forms; from the perspective of the present paper this is so since one can
work with a line bundle, i.e.\ by restriction of the form degree one
of the two copies in the sum discussed above becomes trivial. One
should be able to perform a more detailed analysis at the thresholds
to rule out the infinite rank poles in certain other degrees; they are
well-known to occur in middle degree even on hyperbolic space, see
\cite{Donnelly:Differential}. We briefly point out an approach to this
more detailed analysis at the end of Section~\ref{sec:Minkowski}.

While we use $P_\sigma$ and complex absorption
to analyze the asymptotically hyperbolic resolvent, in fact
when combined with analysis of the Klein-Gordon operator
on asymptotically de Sitter spaces, the complex absorption can be
dropped and the
argument is fully reversible. In particular, on functions, this
reversibility
holds in the sense that the poles of the resolvent
of the Laplacian on $X$
correspond (understood in pairs, at $\sigma$ and at $-\sigma$, as a
dual problem also enters),
apart from some integer coincidences, to poles of
$P_\sigma^{-1}$.
This, including the connection
of the Poisson operators and scattering matrices will be discussed in
a companion paper \cite{Vasy:AH-DS-combine}. A concrete application
without complex absorption is the analysis of
\cite{Baskin-Vasy-Wunsch:Radiation} in asymptotically Minkowski spaces
(again, on functions).

The author is grateful to Robin Graham and Colin Guillarmou for
providing some of the references.

\section{Minkowski space, hyperbolic space and de Sitter
  space}\label{sec:Minkowski}
In this section we connect the analysis on the form bundles on Minkowski, hyperbolic and
de Sitter spaces. Here we underemphasize de Sitter space, but in fact
the analysis of the wave operator on forms on it is completely
parallel to our treatment of hyperbolic space, as we point this out
occasionally in what follows.
This connection has a direct extension, with simple
modifications, to the general asymptotically hyperbolic/de Sitter
setting, thus while the present section is a model case, it is the
heart of the paper.

The starting point of analysis is the manifold $\RR^{n+1}$, or rather
$\RR^{n+1}\setminus o$, which is equipped with an $\RR^+$-action given
by dilations: $(\lambda,z)\mapsto \lambda z$.
A transversal to this action is, as a differentiable manifold,
$\sphere^n$, which may
be considered
as the unit sphere with respect to the Euclidean metric, though the
metric properties are not important here (since we are interested in
the Minkowski metric after all).
Thus, writing $(z_1,\ldots,z_{n+1})$ as
the coordinates, let
$$
dz_1^2+\ldots+dz_n^2+dz_{n+1}^2,
$$
be the {\em Euclidean} metric, and
 let $\rho$ be the Euclidean distance function on
$\RR^{n+1}$ from the origin,
namely
$$
\rho=(z_1^2+\ldots+z_n^2+z_{n+1}^2)^{1/2}.
$$
Then $\sphere^n$ is the $1$-level set of $\rho$.
One can identify $\RR^{n+1}\setminus\{0\}$ via the Euclidean polar
coordinate map with
$\RR^+_\rho\times\sphere^n$, namely the map is
$\RR^+_\rho\times\sphere^n \ni(\rho,\omega)\mapsto \rho \omega\in \RR^{n+1}\setminus\{0\}$.

The Minkowski metric is given by
$$
\tilde g=dz_{n+1}^2-(dz_1^2+\ldots+dz_n^2),
$$
and we also consider the Minkowski distance function $r$.
Thus, away from the light cone, where $z_{n+1}^2=z_1^2+\ldots+z_n^2$,
let
$$
r=|z_{n+1}^2-(z_1^2+\ldots+z_n^2)|^{1/2}.
$$
We are interested in $\Box_{\tilde g}$ on differential forms.
To analyze this,
we conjugate $\rho^2\Box_{\tilde g}$ by the Mellin transform $\cM_\rho$
on $\RR^+_\rho\times\sphere^n$, identified with
$\RR^{n+1}\setminus\{0\}$ as above. To be precise, we identify the form bundle
on $\RR^{n+1}\setminus\{0\}$ with the pullback of
$\Lambda\sphere^n\oplus \Lambda\sphere^n$ by decomposing a
differential form into tangential and normal parts {\em relative to the
Euclidean metric}  i.e.\ writing forms as conormal forms plus
orthogonal to these forms, which we think of as tangential
forms. Notice that $T_{\rho}\RR^+\oplus T_\omega\sphere^n$ is an orthogonal
decomposition of $T_{\rho \omega}\RR^{n+1}$ relative to the Euclidean
metric.
Thus, a $k$-form on $\RR^+\times\sphere^n$ is written as
$$
u=u_T+\frac{d\rho}{\rho}\wedge u_N,
$$
where $u_T$ and $u_N$ are respectively $k$ and $k-1$ forms on
$\sphere^n$, depending on $\rho$;
we used $\frac{d\rho}{\rho}$ instead of $d\rho$ due to
homogeneity reasons.
The so-obtained operator,
$$
{P}_{0,\sigmat}=\cM_\rho \rho^{2}\Box_{\tilde g}\cM_\rho^{-1}\in\Diff^2(\sphere^n;\Lambda\sphere^n\oplus \Lambda\sphere^n),
$$
with $\sigmat$ the Mellin dual parameter,
fits into the framework of \cite{Vasy-Dyatlov:Microlocal-Kerr} and
\cite{Vasy:Microlocal-AH}. As an aside, we remark that it will be convenient to shift the
Mellin parameter, or equivalently conjugate $\Box_{\tilde g}$ by a
power of $\rho$; we shall do so later in
\eqref{eq:form-weight-conjugation}, and this is the reason for adding
the cumbersome subscript $0$ to ${P}_{0,\sigmat}$ presently. We
explain the fit in more detail in the general asymptotically hyperbolic
setting in Section~\ref{sec:conformal}, but we briefly indicate why
this happens in terms of the scalar problem using special properties
of the Minkowski metric here.
Thus, the reason for the aforementioned fit into the framework is simple: in the case of the
scalar d'Alembertian on Minkowski space
this was shown in \cite{Vasy-Dyatlov:Microlocal-Kerr}; the
d'Alembertian on forms is scalar on Minkowski space with respect to
the decomposition of the form bundle relative to any basis of
$\RR^{n+1}$, identified with $T_z\RR^{n+1}$ for all $z\in\RR^{n+1}$,
so with respect to this decomposition of the bundle (identifying the
form bundle as a trivial bundle over $\sphere^n$), the
(component-wise) Mellin transform fits into the framework as
claimed. Now, the transition to the tangent plus normal form bundle
decomposition amounts to a conjugation by a bundle endomorphism (we
perform a similar one below) on $\sphere^n$; such a conjugation
preserves all the properties required for the analysis, except
causing a form-degree dependent shift in the subprincipal term
due to the different homogeneities of the forms (degree $k$ on
$k$-forms relative to the above trivialization, vs.\ degree $0$
relative to the tangential plus normal decomposition).

While so far we explained why the Minkowski wave operator on forms can
be analyzed by means of \cite{Vasy-Dyatlov:Microlocal-Kerr} and
\cite{Vasy:Microlocal-AH}, we still need to connect this to
asymptotically hyperbolic and de Sitter spaces.
But in the region in $\sphere^n$ corresponding to the interior of the future
light cone, which can be identified with the hyperboloid
$$
\HH^n:\ z_{n+1}^2-(z_1^2+\ldots+z_n^2)=1,\ z_{n+1}>0,
$$
via the $\RR^+$-quotient,
one can also consider the Mellin transform of $r^2\Box_{\tilde g}$
with respect to the decomposition $\RR^+_r\times\HH^n$, and the
corresponding tangential-normal decomposition of the form bundle {\em
  relative to the Minkowski metric}, to
get
$$
{\tilde P}_\sigmat=\cM_r r^2\Box_{\tilde g}\cM_r^{-1}\in\Diff^2(\HH^n;\Lambda\HH^n\oplus\Lambda\HH^n).
$$
Now, ${\tilde P}_\sigmat$ is not well-behaved at the boundary of the
future light cone, but it is closely related to ${P}_\sigmat$. If
we use coordinates
$$
\omega_j=\frac{z_j}{z_{n+1}},\ j=1,\ldots,n,
$$
on the sphere away from the equator $z_{n+1}=0$ (note that $\omega_j$
is not the $j$th component of $\omega$ with $\sphere^n$ considered as
a subset of $\RR^{n+1}$!), then, with $|.|$ the
Euclidean norm on $\RR^n$,
$$
r=F(\omega)\rho,\ F(\omega)=\sqrt{\frac{1-|\omega|^2}{1+|\omega|^2}}.
$$
Note that
$$
\mu=F^2
$$
is a smooth function on $\sphere^n$ near (its
intersection with) the light cone which vanishes non-degenerately at
the light cone. On the other hand, the Poincar\'e ball model
$\overline{\HH^n}$ of
$\HH^n$ arises by regarding it as a graph over $\RR^n$ in
$\RR^n\times\RR$, and compactifying $\RR^n$ radially (or geodesically)
to a ball, with boundary defining
function, say, $(z_1^2+\ldots+z_n^2)^{-1/2}$, or, $\rho^{-1}$ -- these
two differ by a smooth positive multiple on $\overline{\HH^n}$. As
$r=1$ on $\HH^n$, this means that $F$ is a valid boundary defining
function in the Poincar\'e model, in contrast with the natural $F^2$
defining function of the light cone. In particular, with $y_j$,
$j=1,\ldots,n-1$, denoting local coordinates on $\sphere^{n-1}$, identified
with $\pa \overline{\HH^n}$, hence the light cone within $\sphere^{n-1}$,
differential forms on $\sphere^n$ have the form
$$
c_I d{y}^I+c_J dF^2\wedge d{y}^J,
$$
with $c_I$ and $c_J$ smooth. We remark that pulling back the Minkowski
metric to $\HH^n$, which by definition yields the hyperbolic metric,
a straightforward calculation yields that that
\begin{equation}\label{eq:hyp-metric-form-F}
g=\frac{(dF)^2}{F^2(1-F^2)}+\frac{1-F^2}{2F^2}h({y},d{y}),
\end{equation}
with $h$ the round metric on the sphere $\sphere^{n-1}$; this satisfies $F^2g$ a
smooth metric up to the boundary, $F=0$ (with a polar coordinate
singularity at $F=1$; $F$ and ${y}$ are not valid coordinates
there, though $F$ is still $\CI$ near $F=1$, and the metric is still
$\CI$ there as
well, as can be seen by using valid coordinates), with the coefficients even functions of $F$. The metric $g$ can be put in
the normal form $g=\frac{dx^2+h}{x^2}$ by letting
$x=\frac{F}{1+\sqrt{1-F^2}}$, which is an equivalent boundary defining
function, but this is not necessary here.

We remark at this point that de Sitter space can be approached in a
completely parallel manner.
Namely, in the region in $\sphere^n$ corresponding to the `equatorial
belt', i.e.\ the exterior of the future and past light cones,
which can be identified with the hyperboloid
$$
\dS^n:\ z_{n+1}^2-(z_1^2+\ldots+z_n^2)=-1,
$$
via the $\RR^+$-quotient,
one can also consider the Mellin transform of $r^2\Box_{\tilde g}$
with respect to the decomposition $\RR^+_r\times\dS^n$, and the
corresponding tangential-normal decomposition of the form bundle
  relative to the Minkowski metric, to
get
$$
{\tilde P}_{\dS,\sigmat}=\cM_r r^2\Box_{\tilde g}\cM_r^{-1}\in\Diff^2(\dS^n;\Lambda\dS^n\oplus\Lambda\dS^n).
$$

Returning to $\HH^n$ and
rewriting a form in tangential-normal decomposition with respect to
the Minkowski metric as such with respect to the Euclidean metric,
where $\HH^n$ is identified as an open subset of $\sphere^n$, one has
$$
v_T+\frac{dr}{r}\wedge v_N=\Big(v_T+\frac{dF}{F}\wedge
v_N\Big)+\frac{d\rho}{\rho}\wedge v_N=u_T+\frac{d\rho}{\rho}\wedge u_N,
$$
with
$$
\begin{bmatrix} v_T\\v_N\end{bmatrix}=J \begin{bmatrix}
  u_T\\u_N\end{bmatrix},\qquad
J=\begin{bmatrix}\Id&\frac{dF}{F}\wedge\\0&\Id\end{bmatrix},\qquad
 J^{-1}=\begin{bmatrix}\Id&-\frac{dF}{F}\wedge\\0&\Id\end{bmatrix}.
$$
Since for $f$ taking values in a bundle over $\sphere^n$
$$
\cM_\rho f(\sigmat,\omega)=\int_0^\infty \rho^{-\imath\sigmat} f\,\frac{d\rho}{\rho},
$$
with a similar formula for $\cM_r$, we have, if we identify $\HH^n$
with an open subset of $\sphere^n$ (the interior of the future light
cone), and correspondingly identify the form bundles, on $\CI(\HH^n)$,
\begin{equation}\label{eq:relate-Mellin}
\cM_\rho\rho^2\Box_{\tilde
 g}\cM_\rho^{-1}(\sigmat)=J^{-1}F^{\imath\sigmat-2}\cM_r r^2\Box_{\tilde g}\cM_r^{-1}F^{-\imath\sigmat}J.
\end{equation}

We next compute $\cM_r r^2\Box_{\tilde g}\cM_r^{-1}$; this is feasible
since $\RR^+\times\HH^n$ is an orthogonal decomposition relative to
$\tilde g$. Concretely, the Minkowski metric is
$$
\tilde g=dr^2+r^2 g,
$$
where $g$ is the hyperbolic metric, since by definition the hyperbolic
metric {\em is} the restriction of the Minkowski metric to the
hyperboloid $\HH^n$. This is a conic metric, whose Laplacian was
computed by Cheeger \cite[Equation~(3.8)]{Cheeger:Spectral}.
This is best done relative to a tangential-normal
decomposition of the form bundle of $\RR^{n+1}$ relative to $\HH^n$
and the Minkowski metric, i.e.\ writing forms as conormal forms plus
orthogonal to these forms, which we again think of as tangential
forms. Concretely, following Cheeger's decomposition,
a $k$-form on $\RR^+\times\HH^n$ is written as
$$
v=\tilde v_T+dr\wedge \tilde v_N,
$$
where $v_T$ and $v_N$ are respectively $k$ and $k-1$ forms on $\HH^n$.
Then, in this decomposition, writing $v=(v_T,v_N)$, writing $X=\HH^n$,
$$
\Box_{\tilde g}=\begin{bmatrix}-r^{-2}\Delta_X-r^{2k-n}\pa_r r^{n-2k}\pa_r &
 -2r^{-1}d_X\\ 
2r^{-3}\delta_X & -r^{-2}\Delta_X-\pa_r r^{2(k-1)-n}\pa_r r^{n-2(k-1)}\end{bmatrix},
$$ 
similarly to Cheeger's case with some sign changes due to the
Lorentzian signature of $\tilde g$.
Rewriting in a form that is more useful for homogeneity reasons,
$$
v=v_T+\frac{dr}{r}\wedge v_N,\ v_T=\tilde v_T,\ v_N=r\tilde v_N,
$$
$$
r^2\Box_{\tilde g}=\begin{bmatrix}-\Delta_X-r^{2k-n+2}\pa_r r^{n-2k}\pa_r &
 -2d_X\\
2\delta_X & -\Delta_X-r^3\pa_r r^{2(k-1)-n}\pa_r r^{n-2(k-1)-1}\end{bmatrix}.
$$
Thus, as
\begin{equation*}\begin{aligned}
&r^{2k-n+2}\pa_r r^{n-2k}\pa_r=(r\pa_r)^2+(n-2k-1)r\pa_r\\
&\qquad\qquad=\Big(r\pa_r+\frac{n-2k-1}{2}\Big)^2-\Big(\frac{n-2k-1}{2}\Big)^2,\\
&r^3\pa_r r^{2(k-1)-n}\pa_r
r^{n-2(k-1)-1}=(r\pa_r-2)(r\pa_r+(n-2k+1))\\
&\qquad\qquad=\Big(r\pa_r+\frac{n-2k-1}{2}\Big)^2-\Big(\frac{n-2k+3}{2}\Big)^2,
\end{aligned}\end{equation*}
in this basis we have
\begin{equation}\begin{aligned}\label{eq:hyp-MT-form}
&\Diff^2(X;\Lambda X\oplus\Lambda X)\ni\cM_r r^2\Box_{\tilde g}\cM_r^{-1}\\
&=\begin{bmatrix}-\Delta_X+\Big(\sigmat-\imath\frac{n-2k-1}{2}\Big)^2+\Big(\frac{n-2k-1}{2}\Big)^2 &
-2d_X\\
2\delta_X & -\Delta_X+\Big(\sigmat-\imath \frac{n-2k-1}{2}\Big)^2+\Big(\frac{n-2k+3}{2}\Big)^2\end{bmatrix}.
\end{aligned}\end{equation}
In view of this formula, it is convenient to introduce
$$
\sigma=\sigmat-\imath \frac{n-2k-1}{2}
$$
to simplify some expressions; so shifting the Mellin parameter amounts
to conjugation of $\Box_{\tilde g}$ by $r^{-(n-2k-1)/2}$, i.e.\
considering
\begin{equation}\label{eq:form-weight-conjugation}
\cM_r r^2 r^{(n-2k-1)/2}\Box_{\tilde
  g}r^{-(n-2k-1)/2}\cM_r^{-1}(\sigma)=\cM_r r^2\Box_{\tilde
  g}\cM_r^{-1}(\sigmat),\ \sigmat=\sigma+\imath \frac{n-2k-1}{2}.
\end{equation}
Thus,
\begin{equation}\begin{aligned}\label{eq:hyp-MT-form-mod}
&\cM_r r^2 r^{(n-2k-1)/2}\Box_{\tilde g}r^{-(n-2k-1)/2}\cM_r^{-1}\\
&=\begin{bmatrix}-\Delta_X+\sigma^2+\Big(\frac{n-2k-1}{2}\Big)^2 &
-2d_X\\
2\delta_X & -\Delta_X+\sigma^2+\Big(\frac{n-2k+3}{2}\Big)^2\end{bmatrix}.
\end{aligned}\end{equation}
Combining with \eqref{eq:relate-Mellin} we deduce the following lemma:

\begin{lemma}\label{lemma:global-to-local}
Let
$$
{P}_\sigma=\cM_\rho \rho^2 \rho^{(n-2k-1)/2}\Box_{\tilde g}\rho^{-(n-2k-1)/2}\cM_\rho^{-1}.
$$
Then
\begin{equation*}\begin{aligned}
&\begin{bmatrix}-\Delta_X+\sigma^2+\Big(\frac{n-2k-1}{2}\Big)^2 &
-2d_X\\
2\delta_X & -\Delta_X+\sigma^2+\Big(\frac{n-2k+3}{2}\Big)^2\end{bmatrix}\\
&\qquad\qquad\qquad=JF^{-\imath\sigma+(n-2k-1)/2+2}{P}_\sigma F^{\imath\sigma-(n-2k-1)/2}J^{-1}.
\end{aligned}\end{equation*}
\end{lemma}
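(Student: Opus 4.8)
The plan is purely to assemble the identity from pieces already in hand: the change-of-decomposition formula \eqref{eq:relate-Mellin}, the explicit computation \eqref{eq:hyp-MT-form} (equivalently \eqref{eq:hyp-MT-form-mod}) of $\cM_r r^2\Box_{\tilde g}\cM_r^{-1}$, the weight-conjugation observation \eqref{eq:form-weight-conjugation}, and the bookkeeping that relates the two Mellin transforms. Throughout I would abbreviate $a=(n-2k-1)/2$, work in the region of $\sphere^n$ that is the interior of the future light cone (where $F>0$, so that all the powers $F^c$ are well-defined, positive and invertible, and $\HH^n$ is genuinely identified with this open subset), and keep in mind that $F=F(\omega)$ depends only on the angular variable.

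First I would record the effect on the Mellin side of conjugating by a power of $\rho$. Exactly as in \eqref{eq:form-weight-conjugation} for $\cM_r$, conjugation of the multiplication operators $\rho^{\pm a}$ by $\cM_\rho$ is simply a shift of the Mellin parameter, and the two shifts combine to the single shift $\sigmat\mapsto\sigmat-\imath a$; hence $\cM_\rho\,\rho^a(\rho^2\Box_{\tilde g})\rho^{-a}\,\cM_\rho^{-1}$ evaluated at $\sigma$ equals $\cM_\rho\rho^2\Box_{\tilde g}\cM_\rho^{-1}$ evaluated at $\sigmat=\sigma+\imath a$. In particular $P_\sigma=\big(\cM_\rho\rho^2\Box_{\tilde g}\cM_\rho^{-1}\big)(\sigmat)$ with $\sigmat=\sigma+\imath a$.

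Next I would feed this into \eqref{eq:relate-Mellin}, which at the parameter $\sigmat$ reads $\big(\cM_\rho\rho^2\Box_{\tilde g}\cM_\rho^{-1}\big)(\sigmat)=J^{-1}F^{\imath\sigmat-2}\big(\cM_r r^2\Box_{\tilde g}\cM_r^{-1}\big)(\sigmat)F^{-\imath\sigmat}J$. Writing $Q:=\big(\cM_r r^2\Box_{\tilde g}\cM_r^{-1}\big)(\sigmat)$, this gives $P_\sigma=J^{-1}F^{\imath\sigmat-2}\,Q\,F^{-\imath\sigmat}J$; solving for $Q$ yields $Q=F^{-\imath\sigmat+2}\,JP_\sigma J^{-1}\,F^{\imath\sigmat}$. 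On the other hand, by \eqref{eq:hyp-MT-form} with $\sigmat=\sigma+\imath a$ (which is precisely \eqref{eq:hyp-MT-form-mod}), $Q$ is exactly the matrix operator appearing on the left-hand side of the asserted identity. Substituting $\imath\sigmat=\imath\sigma-a$ turns the two exponents into $-\imath\sigma+a+2$ and $\imath\sigma-a$.

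Finally, since $F^c$ is multiplication by a scalar function while $J$ and $J^{-1}$ are bundle endomorphisms, $F^c$ commutes with $J^{\pm1}$; moving the two powers of $F$ past $J$ and $J^{-1}$ gives $Q=JF^{-\imath\sigma+a+2}P_\sigma F^{\imath\sigma-a}J^{-1}$, which is the claim with $a=(n-2k-1)/2$. There is essentially no obstacle here beyond careful bookkeeping of the Mellin-parameter shifts and of the open set on which the identification $\HH^n\subset\sphere^n$ (and hence $F>0$) is valid; the single point worth emphasizing is that $F$ depends on $\omega$ alone, which is what makes both the shift computation and the commutation with $J$ legitimate.
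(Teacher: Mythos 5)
Your proof is correct and follows exactly the route the paper intends (the paper itself gives no proof, just "Combining with \eqref{eq:relate-Mellin} we deduce..."): substitute $\sigmat=\sigma+\imath\frac{n-2k-1}{2}$ arising from the $\rho^{(n-2k-1)/2}$-conjugation (the $\cM_\rho$-analogue of \eqref{eq:form-weight-conjugation}) into \eqref{eq:relate-Mellin}, identify the $\cM_r$-conjugated operator at this shifted parameter with the block matrix via \eqref{eq:hyp-MT-form-mod}, and solve. The one cosmetic slip is the phrase "the two shifts combine"—conjugation by $\rho^{\pm a}$ produces a single Mellin-parameter shift, not two—but the displayed conclusion and the remaining bookkeeping (commutation of scalar multiplication by powers of $F$ with the bundle endomorphisms $J^{\pm1}$, the substitution $\imath\sigmat=\imath\sigma-a$) are all accurate.
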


While \eqref{eq:hyp-MT-form-mod} is not a diagonal matrix, the off-diagonal terms have a
special structure. In particular, for $v_T$ coclosed and $v_N$ closed
we have that 
\begin{equation*}\begin{aligned}
&\cM_r r^2 r^{(n-2k-1)/2}\Box_{\tilde g}r^{-(n-2k-1)/2}\cM_r^{-1}\begin{bmatrix}v_T\\v_N\end{bmatrix}\\
&=\begin{bmatrix}-\Delta_X+\sigma^2+\Big(\frac{n-2k-1}{2}\Big)^2 &
0\\
0 & -\Delta_X+\sigma^2+\Big(\frac{n-2k+3}{2}\Big)^2\end{bmatrix}\begin{bmatrix}v_T\\v_N\end{bmatrix},
\end{aligned}\end{equation*}
so
\begin{equation*}\begin{aligned}
&\cM_r r^2 r^{(n-2k-1)/2}\Box_{\tilde
  g}r^{-(n-2k-1)/2}\cM_r^{-1}\begin{bmatrix}\delta_X
  d_X&0\\0&d_X\delta_X\end{bmatrix}\\
&=\begin{bmatrix}\delta_X d_X&0\\0&d_X\delta_X\end{bmatrix}\cM_r r^2 r^{(n-2k-1)/2}\Box_{\tilde g}r^{-(n-2k-1)/2}\cM_r^{-1}\\
&=\begin{bmatrix}\Big(-\Delta_X+\sigma^2+\Big(\frac{n-2k-1}{2}\Big)^2\Big) \delta_X d_X&
0\\
0 & \Big(-\Delta_X+\sigma^2+\Big(\frac{n-2k+3}{2}\Big)^2\Big) d_X\delta_X\end{bmatrix}.
\end{aligned}\end{equation*}
Correspondingly, let $\iota_{X,T,k}$, resp.\ $\iota_{X,N,k-1}$ denote the inclusion
maps $\Lambda^kX\to\Lambda^kX\oplus\Lambda^{k-1}X$, resp.\
$\Lambda^{k-1}X\to\Lambda^kX\oplus\Lambda^{k-1}X$ as $0$ in the other
summand, and $\pi_{X,T,k}$, resp.\ $\pi_{X,N,k-1}$ be the projection
maps.
Then, using \eqref{eq:relate-Mellin},
\begin{equation*}\begin{aligned}
&\delta_X
d_X\Big(-\Delta_X+\sigma^2+\Big(\frac{n-2k-1}{2}\Big)^2\Big)\\
&\qquad\qquad=\delta_X
d_X\pi_{X,T,k} \cM_r r^2 r^{(n-2k-1)/2}\Box_{\tilde g}r^{-(n-2k-1)/2}\cM_r^{-1}\iota_{X,T,k},\\
&d_X
\delta_X\Big(-\Delta_X+\sigma^2+\Big(\frac{n-2k+3}{2}\Big)^2\Big)\\
&\qquad\qquad=d_X
\delta_X\pi_{X,N,k-1} \cM_r r^2 r^{(n-2k-1)/2}\Box_{\tilde g}r^{-(n-2k-1)/2}\cM_r^{-1}\iota_{X,N,k-1}.
\end{aligned}\end{equation*}
We then have, via regarding $X=\HH^n$ as a subset of $\tilde X=\sphere^n$
and using the corresponding identification of the form bundles
\begin{equation*}\begin{aligned}
&\delta_X
d_X\Big(-\Delta_X+\sigma^2+\Big(\frac{n-2k-1}{2}\Big)^2\Big)\\
&\qquad=\delta_X
d_X\pi_{X,T,k} JF^{-\imath\sigma+(n-2k-1)/2+2}{P}_\sigma F^{\imath\sigma-(n-2k-1)/2}J^{-1}\iota_{X,T,k},\\
&d_X
\delta_X\Big(-\Delta_X+\sigma^2+\Big(\frac{n-2k+3}{2}\Big)^2\Big)\\
&\qquad=d_X
\delta_X\pi_{X,N,k-1} JF^{-\imath\sigma+(n-2k-1)/2+2}{P}_\sigma F^{\imath\sigma-(n-2k-1)/2}J^{-1}\iota_{X,N,k-1}.
\end{aligned}\end{equation*}
Correspondingly,
for an appropriately chosen inverse
${P}_\sigma^{-1}$ of the
$\cM_\rho$-conjugated operator we have for $\im\sigma\gg 1$,
with $r_X$ denoting restriction to $X$, $e_X$ an extension map from
$X$ to $\tilde X$, i.e.\ $e_X:\CI_c(X;\Lambda X\oplus\Lambda X)\to\CI(\tilde
X;\Lambda\tilde X\oplus\Lambda\tilde X)$ with $r_X e_X=\Id$ that 
\begin{equation}\begin{aligned}\label{eq:form-res-connection}
&\delta_X
d_X\Big(-\Delta_X+\sigma^2+\Big(\frac{n-2k-1}{2}\Big)^2\Big)^{-1}
=\Big(-\Delta_X+\sigma^2+\Big(\frac{n-2k-1}{2}\Big)^2\Big)^{-1}\delta_X
d_X\\
&\qquad=\delta_X
d_X\pi_{X,T,k}
JF^{-\imath\sigma+(n-2k-1)/2}r_X{P}_\sigma^{-1}e_X F^{\imath\sigma-(n-2k-1)/2
  -2}J^{-1}\iota_{X,T,k}\\
\end{aligned}\end{equation}
and
\begin{equation}\begin{aligned}\label{eq:form-res-connection-2}
&d_X
\delta_X\Big(-\Delta_X+\sigma^2+\Big(\frac{n-2k+3}{2}\Big)^2\Big)^{-1}
=\Big(-\Delta_X+\sigma^2+\Big(\frac{n-2k+3}{2}\Big)^2\Big)^{-1}d_X
\delta_X\\
&\qquad=d_X
\delta_X\pi_{X,N,k-1}
JF^{-\imath\sigma+(n-2k-1)/2}r_X{P}_\sigma^{-1}e_X F^{\imath\sigma-(n-2k-1)/2-2}J^{-1}\iota_{X,N,k-1}.\\
\end{aligned}\end{equation}
Concretely, we have the following lemma:

\begin{lemma}\label{lemma:global-to-local-relation}
If $G_\sigma$ is chosen so that it
maps $\CI_c(X;\Lambda X\oplus\Lambda X)$ to
$\CI$ forms on $\tilde X$ in a neighborhood $\cU$ of $\overline{X}$,
and so that $r_X P_\sigma G_\sigma=\Id$
where $r_X$ denotes restriction to $X$,
then for $\im\sigma\gg 1$, then
\eqref{eq:form-res-connection}, resp.\ \eqref{eq:form-res-connection-2}, hold
on $\CI_c(X;\Lambda^k X)$, resp.\ $\CI_c(X;\Lambda^{k-1} X)$, with
$G_\sigma$ replacing $P_\sigma^{-1}$.
\end{lemma}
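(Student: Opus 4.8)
The plan is to notice that the derivation of \eqref{eq:form-res-connection} and \eqref{eq:form-res-connection-2} uses nothing about $P_\sigma^{-1}$ beyond the two properties now hypothesized for $G_\sigma$: that it sends $\CI_c(X;\Lambda X\oplus\Lambda X)$ to forms $\CI$ near $\overline X$, and that $r_XP_\sigma G_\sigma=\Id$. Hence the same computation proves the lemma; I would carry it out as follows.

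Fix $f\in\CI_c(X;\Lambda^k X)$, set $\psi=F^{\imath\sigma-(n-2k-1)/2-2}J^{-1}\iota_{X,T,k}f$, and let $v=r_XG_\sigma e_X\psi$. Since $F>0$ is $\CI$ on $X$ (including across the interior coordinate singularity $F=1$) and $\iota_{X,T,k}f$ has no normal component, $\psi\in\CI_c(X;\Lambda X\oplus\Lambda X)$ is supported in a compact subset of the open manifold $X$; by the mapping hypothesis $G_\sigma e_X\psi$ is $\CI$ near $\overline X$, so $v$ is the restriction to $X$ of a smooth form, and since $P_\sigma\in\Diff^2(\tilde X;\Lambda\tilde X\oplus\Lambda\tilde X)$ is local, $P_\sigma v=(P_\sigma G_\sigma e_X\psi)|_X=\psi$ on $X$ by the hypothesis $r_XP_\sigma G_\sigma=\Id$ (precomposed with $e_X$ exactly as in \eqref{eq:form-res-connection}).

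Now I would substitute Lemma~\ref{lemma:global-to-local}. Writing $L_\sigma$ for the $2\times2$ operator on the left of that lemma, and using that multiplication by a power of $F$ commutes with the bundle map $J$, Lemma~\ref{lemma:global-to-local} rearranges to $P_\sigma=F^{\imath\sigma-(n-2k-1)/2-2}J^{-1}L_\sigma JF^{-\imath\sigma+(n-2k-1)/2}$ on $X$. Inserting this into $P_\sigma v=\psi$ and cancelling the $F$-powers and $J$'s against those built into $\psi$ yields $L_\sigma\big(JF^{-\imath\sigma+(n-2k-1)/2}v\big)=\iota_{X,T,k}f$. Put $a=\pi_{X,T,k}JF^{-\imath\sigma+(n-2k-1)/2}v$ and $b=\pi_{X,N,k-1}JF^{-\imath\sigma+(n-2k-1)/2}v$; then $a$ is exactly the right-hand side of \eqref{eq:form-res-connection} with $G_\sigma$ in place of $P_\sigma^{-1}$ and without the leading $\delta_Xd_X$. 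By \eqref{eq:hyp-MT-form-mod} the top row of $L_\sigma\begin{bmatrix}a\\b\end{bmatrix}=\begin{bmatrix}f\\0\end{bmatrix}$ is $(-\Delta_X+\sigma^2+(n-2k-1)^2/4)a-2d_Xb=f$. Applying $\delta_Xd_X$ kills the middle term since $d_Xd_X=0$, and since $\delta_Xd_X$ commutes with $\Delta_X$ this becomes $(-\Delta_X+\sigma^2+(n-2k-1)^2/4)\delta_Xd_Xa=\delta_Xd_Xf$. Inverting (justified below) gives $\delta_Xd_Xa=\delta_Xd_X(-\Delta_X+\sigma^2+(n-2k-1)^2/4)^{-1}f$, which is \eqref{eq:form-res-connection} with $G_\sigma$ for $P_\sigma^{-1}$. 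The identity \eqref{eq:form-res-connection-2} follows identically, starting from $f'\in\CI_c(X;\Lambda^{k-1}X)$ and $\iota_{X,N,k-1}$, now using the bottom row of the matrix equation, the relation $\delta_X\delta_X=0$, and the commutation of $d_X\delta_X$ with $\Delta_X$.

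The one step beyond algebra --- and the one I expect to need care --- is the inversion of $-\Delta_X+\sigma^2+(n-2k-1)^2/4$: one must verify that $a$, hence $\delta_Xd_Xa$, lies in the weighted $L^2$ space on which this operator acts through the resolvent of the self-adjoint $\Delta_X$. This is where the hypotheses enter: $v$ is bounded together with finitely many derivatives up to $\pa X$, and for $\im\sigma\gg1$ the factor $F^{-\imath\sigma}$ decays rapidly as $F\to0$, so $a$ and its relevant derivatives decay rapidly at $\pa X$, in particular lying in $L^2(X)$ for the hyperbolic volume; one only needs to absorb the $\frac{dF}{F}\wedge$ in $J$ and observe that the coordinate singularity at $F=1$ sits in the interior of $X$ and is harmless. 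Everything else is the algebra above, resting on Lemma~\ref{lemma:global-to-local}, $d_X^2=\delta_X^2=0$, and $[\delta_Xd_X,\Delta_X]=[d_X\delta_X,\Delta_X]=0$.
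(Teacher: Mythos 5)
Your argument is correct and follows the same path as the paper's proof: set $v=r_XG_\sigma\psi$, use the hypothesis $r_XP_\sigma G_\sigma=\Id$ together with the conjugation identity from Lemma~\ref{lemma:global-to-local} to convert $P_\sigma v=\psi$ into an equation for the conjugated system, kill the off-diagonal $d_X b$ term by applying $\delta_Xd_X$ (equivalently, the paper's matrix manipulation with $\operatorname{diag}(\delta_Xd_X,d_X\delta_X)$), commute with $\Delta_X$, and then identify the result with the resolvent applied to $\delta_Xd_Xf$ by the uniqueness of $L^2$ solutions for $\im\sigma\gg 1$, with the $L^2$ membership supplied by the decay of $F^{-\imath\sigma}$. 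The only thing you glide over slightly is the first equality in \eqref{eq:form-res-connection} (that $\delta_Xd_X$ and $(-\Delta_X+\sigma^2+(n-2k-1)^2/4)^{-1}$ commute), which the paper obtains by noting $\delta_Xd_X(-\Delta_X+\sigma^2+(n-2k-1)^2/4)^{-1}f$ also solves \eqref{eq:hyperbolic-Lap-eqn} and is in $L^2$; your final display implicitly uses this but does not state it.
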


\begin{proof}
We consider \eqref{eq:form-res-connection}; the
treatment of \eqref{eq:form-res-connection-2} is completely analogous.
Let $f\in\CI_c(X;\Lambda^k X)$, $\im\sigma\gg 1$. We claim that
$$
u=\delta_X d_X\pi_{X,T,k} JF^{-\imath\sigma+(n-2k-1)/2}
r_X G_\sigma e_X F^{\imath\sigma-(n-2k-1)/2
  -2}J^{-1}\iota_{X,T,k}f\in\CI(X;\Lambda^k X)
$$
satisfies
\begin{equation}\label{eq:hyperbolic-Lap-eqn}
\Big(-\Delta_X+\sigma^2+\Big(\frac{n-2k- 1}{2}\Big)^2\Big)u=\tilde f,\
\tilde f=\delta_X d_X f.
\end{equation}
Indeed,
using $r_X 
P_\sigma=P_\sigma r_X$,
\begin{equation*}\begin{aligned}
&\begin{bmatrix}-\Delta_X+\sigma^2+\Big(\frac{n-2k-1}{2}\Big)^2 &
-2d_X\\
2\delta_X &
-\Delta_X+\sigma^2+\Big(\frac{n-2k+3}{2}\Big)^2\end{bmatrix}\\
&\qquad\qquad\times JF^{-\imath\sigma+(n-2k-1)/2}r_X
G_\sigma e_X F^{\imath\sigma-(n-2k-1)/2 -2}J^{-1}\iota_{X,T,k} f\\
&=JF^{-\imath\sigma+(n-2k-1)/2+2}{P}_\sigma r_X
G_\sigma e_X F^{\imath\sigma-(n-2k-1)/2 -2}J^{-1}\iota_{X,T,k}
f=r_X\iota_{X,T,k}f=\begin{bmatrix}f\\ 0\end{bmatrix},
\end{aligned}\end{equation*}
so with
$$
\tilde u=JF^{-\imath\sigma+(n-2k-1)/2}
r_X G_\sigma e_X F^{\imath\sigma-(n-2k-1)/2 -2}J^{-1}\iota_{X,T,k}f,
$$
so $u=\delta_X d_X\pi_{X,T,k} \tilde u$,
one has
\begin{equation*}\begin{aligned}
\delta_X d_X f&=\delta_X d_X\pi_{X,T,k}\begin{bmatrix}f\\
  0\end{bmatrix}\\
&=
\delta_X d_X\pi_{X,T,k}\begin{bmatrix}-\Delta_X+\sigma^2+\Big(\frac{n-2k-1}{2}\Big)^2 &
-2d_X\\
2\delta_X &
-\Delta_X+\sigma^2+\Big(\frac{n-2k+3}{2}\Big)^2\end{bmatrix}\tilde u\\
&=\pi_{X,T,k}\begin{bmatrix}\delta_X d_X&0\\0&d_X\delta_X\end{bmatrix}\begin{bmatrix}-\Delta_X+\sigma^2+\Big(\frac{n-2k-1}{2}\Big)^2 &
-2d_X\\
2\delta_X &
-\Delta_X+\sigma^2+\Big(\frac{n-2k+3}{2}\Big)^2\end{bmatrix}\tilde u\\
&=\pi_{X,T,k}\begin{bmatrix}\delta_X d_X\Big(-\Delta_X+\sigma^2+\Big(\frac{n-2k-1}{2}\Big)^2\Big) &
0\\
0 &
d_X\delta_X\Big(-\Delta_X+\sigma^2+\Big(\frac{n-2k+3}{2}\Big)^2\Big)\end{bmatrix}\tilde
u\\
&=\delta_X
d_X\Big(-\Delta_X+\sigma^2+\Big(\frac{n-2k-1}{2}\Big)^2\Big)
\pi_{X,T,k}\tilde u=\Big(-\Delta_X+\sigma^2+\Big(\frac{n-2k-1}{2}\Big)^2\Big) u,
\end{aligned}\end{equation*}
and
\begin{equation*}\begin{aligned}
&u\in\pi_{X,T,k}JF^{-\imath\sigma+(n-2k-1)/2}\CI(\overline{X};\Lambda^k\tilde X\oplus\Lambda^{k-1}\tilde X)\\
&\qquad\subset
F^{-\imath\sigma+(n-2k-1)/2}\CI(\overline{X};\Lambda^k\tilde X)+
F^{-\imath\sigma+(n-2k-1)/2}\frac{dF}{F}\wedge
\CI(\overline{X};\Lambda^{k-1}\tilde X),
\end{aligned}\end{equation*}
and thus is in $L^2(X;\Lambda^k X)$.
Therefore, as given $\tilde f\in L^2(X;\Lambda^k X)$
there is a unique $L^2$ form solving \eqref{eq:hyperbolic-Lap-eqn},
we conclude that
$\Big(-\Delta_X+\sigma^2+\Big(\frac{n-2k-1}{2}\Big)^2\Big)^{-1}\delta_X
d_Xf$
is indeed given by the right hand side of the second equality in
\eqref{eq:form-res-connection}.
Since
$$
\delta_X
d_X\Big(-\Delta_X+\sigma^2+\Big(\frac{n-2k-1}{2}\Big)^2\Big)^{-1}f
$$
also solves \eqref{eq:hyperbolic-Lap-eqn} due to the fact that
$\Delta_X$ and $\delta_X d_X$
commute as operators on $\CI(X;\Lambda X)$, and as it is in $L^2$ (for
$\im\sigma\gg 1$), the first equality in
\eqref{eq:form-res-connection} also holds.
\end{proof}

Concretely, $G_\sigma$ is constructed using a complex absorption operator
$Q_\sigma$, with Schwartz kernel supported in $(\tilde X\setminus
\cU)^2$, as
$$
G_\sigma=(P_\sigma-\imath Q_\sigma)^{-1}.
$$
In fact, $P_\sigma-\imath Q_\sigma$ is
Fredholm between appropriate spaces recalled in the next section, with
a meromorphic inverse and with non-trapping high energy estimates
under non-trapping assumptions on $X$. (Technically $Q$ is defined
only for a certain set of $\sigma$, or rather one needs to use
different operators $Q$ in different subsets of $\Cx$, but in strips,
or even in somewhat larger conic sectors, which are our main interest,
a single $Q$ suffices. We refer the reader to
\cite[Section~4.7]{Vasy-Dyatlov:Microlocal-Kerr} for further details.)
Then, as the right hand side of \eqref{eq:form-res-connection} is meromorphic on
$\Cx$ with finite rank poles and has appropriate high energy estimates
under non-trapping assumptions, one obtains such an extension of the
left hand side.
A similar argument applies for \eqref{eq:form-res-connection-2}, but we
need to note this is acting on $k-1$ forms on $X$, and thus we need to
replace $k$ by $k+1$ throughout to obtain a formula for the $k$-form
Laplacian, resulting in the shift in the statement of Theorem~\ref{thm:main}.

A bit of care is needed in order to derive the precise form of the
mapping properties and the corresponding
high energy estimates. Namely, as we recall in the next sections,
$$
(P_\sigma-\imath Q_\sigma)^{-1}:H^{s-1}(\tilde X;\Lambda
\tilde X)\to 
H^{s}(\tilde X;\Lambda
\tilde X),\ s>1/2-\im\sigma,
$$ 
with the high energy estimate that for fixed $s$, $\sigma$ satisfying
$-s+1/2<\im\sigma$, $|\re\sigma|>R$, $R>0$ sufficiently large,
$$
\|(P_\sigma-\imath Q_\sigma)^{-1}\|_{\cL(H^{s-1}_{|\sigma|^{-1}}(\tilde X;\Lambda
\tilde X),
H^{s}_{|\sigma|^{-1}} (\tilde X;\Lambda
\tilde X))}\leq C|\sigma|^{-1},
$$
where $H^{s}_{|\sigma|^{-1}} (\tilde X)$ is the semiclassical Sobolev
space in which derivatives come with a prefactor of $|\sigma|^{-1}$;
see the introduction of \cite{Vasy:Microlocal-AH} for more
details. Now all the other operators in \eqref{eq:form-res-connection}-\eqref{eq:form-res-connection-2}
are straightforward to estimate, being bundle maps or differential
operators. However, these are singular maps: $F$ vanishes at $\pa X$,
and $J^{-1}$ involves $\frac{dF}{F}\wedge$ when applied to normal
forms, i.e.\ essentially $\frac{d\mu}{\mu}\wedge$. Thus, dropping the
bundles from the notation momentarily,
\begin{equation*}\begin{aligned}
&\|(P_\sigma-\imath Q_\sigma)^{-1}e_X F^{\imath\sigma-(n-2k-1)/2
  -2}J^{-1}\iota_{X,T,k}f\|_{H^s_{|\sigma|^{-1}}(\tilde X)}\\
&\qquad\qquad\leq C\|F^{\imath\sigma-(n-2k-1)/2
  -2}f\|_{H^{s-1}_{|\sigma|^{-1}}(\overline{X}_\even)},\\
&\|(P_\sigma-\imath
Q_\sigma)^{-1} e_X F^{\imath\sigma-(n-2k-1)/2-2}J^{-1}\iota_{X,N,k-1}f\|_{H^s_{|\sigma|^{-1}}(\tilde X)}\\
&\qquad\qquad\leq
C\big(\|F^{\imath\sigma-(n-2k-1)/2
  -2}f\|_{H^{s-1}_{|\sigma|^{-1}}(\overline{X}_{\even})}+\|F^{\imath\sigma-(n-2k-1)/2
  -4}d\mu\wedge f\|_{H^{s-1}_{|\sigma|^{-1}}(\overline{X}_{\even})}\big)\\
&\qquad\qquad\qquad\leq \tilde C\|F^{\imath\sigma-(n-2k-1)/2
  -4}f\|_{H^{s-1}_{|\sigma|^{-1}}(\overline{X}_{\even})},
\end{aligned}\end{equation*}
where the loss for normal forms relative to tangential forms (in terms of a simple Sobolev space, given on the right hand side
of the last inequality)
comes from the singular factor in $J$ giving rise to the $d\mu\wedge$
term, and where the spaces on
$\tilde X$ and $\overline{X}_{\even}$ are the sections of appropriate
degree parts of $\Lambda\tilde X\oplus\Lambda\tilde X$, resp.\
$\Lambda \overline{X}_{\even}$. Further,
\begin{equation*}\begin{aligned}
&\|F^{\imath\sigma-(n-2k-1)/2+2}\pi_{X,T,k}
JF^{-\imath\sigma+(n-2k-1)/2}r_X
v\|_{H^s_{|\sigma|^{-1}}(\overline{X}_{\even})}\\
&\qquad\leq C'\big(\|F^{\imath\sigma-(n-2k-1)/2+2}\pi_{X,T,k} 
JF^{-\imath\sigma+(n-2k-1)/2}r_X
v\|_{H^s_{|\sigma|^{-1}}(\overline{X}_{\even})}\\
&\qquad\qquad\qquad+
\|F^{\imath\sigma-(n-2k-1)/2} d\mu\wedge \pi_{X,T,k} 
JF^{-\imath\sigma+(n-2k-1)/2}r_X
v\|_{H^s_{|\sigma|^{-1}}(\overline{X}_{\even})}\big)\\
&\qquad\leq C'' \|v\|_{H^s_{|\sigma|^{-1}}(\tilde X)},\\
&\|F^{\imath\sigma-(n-2k-1)/2}\pi_{X,N,k-1} JF^{-\imath\sigma+(n-2k-1)/2}r_Xv\|_{H^s_{|\sigma|^{-1}}(\overline{X}_{\even})}\leq C' \|v\|_{H^s_{|\sigma|^{-1}}(\tilde X)},
\end{aligned}\end{equation*}
where the loss is now in tangential forms due to $J$. However, these losses
are merely apparent, as we momentarily show using the special structure
of $d_X\delta_X$ and $\delta_X d_X$.

Indeed, $\delta_X d_X, d_X\delta_X$ are even differential operators, i.e.\ when
regarded as an operator on $\overline{X}_{\even}$, they satisfy
$\delta_X d_X,d_X\delta_X\in\Diff^2(\overline{X}_{\even};\Lambda
\overline{X}_{\even})$, and even $\delta_X
d_X,d_X\delta_X\in\Diffb^2(\overline{X}_{\even};\Lambda \overline{X}_{\even})$; this can be seen from
a direct calculation, which we discuss below in the general
conformally compact case in Lemma~\ref{lemma:d-del-even}. (Recall that
$\Vb(\overline{X}_{\even})$ is the set of smooth vector fields tangent
to the boundary; $\Diffb$ is generated by these.) In fact, an even
stronger statement also holds for certain parts of this operator,
namely, with $d\mu\wedge$ denoting the operator of wedge product with $d\mu$,
$$
(d\mu\wedge)d_X\delta_X,\delta_X d_X(d\mu\wedge)\in\mu\Diffb^2(\overline{X}_{\even};\Lambda
\overline{X}_{\even});
$$
see Lemma~\ref{lemma:d-del-even}.
Correspondingly, for any $\alpha\in\Cx$, basically relying on
$$
\mu^{\alpha/2}(\mu\pa_\mu)\mu^{-\alpha/2}=\mu\pa_\mu-\alpha/2,
$$
one has
$$
F^{\alpha}\delta_X
d_XF^{-\alpha}\in\Diffb^2(\overline{X}_{\even};\Lambda
\overline{X}_{\even})\subset
\Diff^2(\overline{X}_{\even};\Lambda \overline{X}_{\even}),
$$
and
$$
F^{\alpha}\delta_X
d_XF^{-\alpha}(\frac{dF}{F}\wedge)\in\Diffb^2(\overline{X}_{\even};\Lambda
\overline{X}_{\even})\subset
\Diff^2(\overline{X}_{\even};\Lambda \overline{X}_{\even}),
$$
with analogous statements for $d_X\delta_X$.
Thus,
\begin{equation*}\begin{aligned}
&F^{\imath\sigma-(n-2k-1)/2}\delta_Xd_X\pi_{X,T,k}
JF^{-\imath\sigma+(n-2k-1)/2}\in\Diffb^2(\overline{X}_{\even};\Lambda
\overline{X}_{\even}),\\
&F^{\imath\sigma-(n-2k-1)/2} d_X\delta_X\pi_{X,N,k-1}
JF^{-\imath\sigma+(n-2k-1)/2}\in\Diffb^2(\overline{X}_{\even};\Lambda
\overline{X}_{\even}),\\
&F^{\imath\sigma-(n-2k-1)/2-2} (d\mu\wedge )d_X\delta_X\pi_{X,N,k-1}
JF^{-\imath\sigma+(n-2k-1)/2}\in\Diffb^2(\overline{X}_{\even};\Lambda
\overline{X}_{\even}).\\
\end{aligned}\end{equation*}
Therefore,
for $s\in\RR$, the operators
\begin{equation*}\begin{aligned}
&F^{\imath\sigma-(n-2k-1)/2}\langle|\sigma|\rangle^{-2}\delta_Xd_X\pi_{X,T,k}
JF^{-\imath\sigma+(n-2k-1)/2},\\
&F^{\imath\sigma-(n-2k-1)/2} \langle|\sigma|\rangle^{-2} d_X\delta_X\pi_{X,N,k-1}
JF^{-\imath\sigma+(n-2k-1)/2},\\
&F^{\imath\sigma-(n-2k-1)/2-2} \langle|\sigma|\rangle^{-2}
(d\mu\wedge)
d_X\delta_X\pi_{X,N,k-1}
JF^{-\imath\sigma+(n-2k-1)/2}\\
\end{aligned}\end{equation*}
are uniformly bounded in $\cL(H^{s+2}_{|\sigma|^{-1}}(\overline{X}_{\even};\Lambda
\overline{X}_{\even}),H^{s}_{|\sigma|^{-1}}(\overline{X}_{\even};\Lambda
\overline{X}_{\even}))$. In summary, using
\eqref{eq:form-res-connection}-\eqref{eq:form-res-connection-2} for $\im\sigma>-s-3/2$, $|\re\sigma|$
sufficiently large,
\begin{equation*}\begin{aligned}
&\|F^{\imath\sigma-(n-2k-1)/2}\delta_X
d_X\Big(-\Delta_X+\sigma^2+\Big(\frac{n-2k-1}{2}\Big)^2\Big)^{-1}f\|_{H^s_{|\sigma|^{-1}}(\overline{X}_{\even})}\\
&\qquad\qquad\leq C_0 |\sigma|\|F^{\imath\sigma-(n-2k-1)/2
  -2}f\|_{H^{s+1}_{|\sigma|^{-1}}(\overline{X}_\even)},\\
&\|F^{\imath\sigma-(n-2k-1)/2} d_X\delta_X
\Big(-\Delta_X+\sigma^2+\Big(\frac{n-2k+3}{2}\Big)^2\Big)^{-1}f\|_{H^s_{|\sigma|^{-1}}(\overline{X}_{\even})}\\
&\qquad+\|F^{\imath\sigma-(n-2k-1)/2-2} (d\mu\wedge)d_X\delta_X
\Big(-\Delta_X+\sigma^2+\Big(\frac{n-2k+3}{2}\Big)^2\Big)^{-1}f\|_{H^s_{|\sigma|^{-1}}(\overline{X}_{\even})}\\
&\qquad\qquad\leq C_0 |\sigma|
\|F^{\imath\sigma-(n-2k-1)/2
  -2}f\|_{H^{s+1}_{|\sigma|^{-1}}(\overline{X}_{\even})}+\|F^{\imath\sigma-(n-2k-1)/2
  -4}d\mu\wedge f\|_{H^{s+1}_{|\sigma|^{-1}}(\overline{X}_{\even})}.
\end{aligned}\end{equation*}

Since $\delta_X d_X+d_X\delta_X=\Delta_X$, combining
\eqref{eq:form-res-connection}-\eqref{eq:form-res-connection-2}
gives the meromorphic continuation of
$\Big(-\Delta_X+\sigma^2+\Big(\frac{n-2k-1}{2}\Big)^2\Big)^{-1}$
itself, but with another branch arising from closed forms, i.e.\ the
meromorphic continuation is not merely to the Riemann surface of the
inverse function of $\lambda\mapsto
\sqrt{\lambda-\Big(\frac{n-2k-1}{2}\Big)^2}$, rather the joint Riemann
surface of this and $\lambda\mapsto
\sqrt{\lambda-\Big(\frac{n-2k+1}{2}\Big)^2}$. Further, what one actually obtains is
\begin{equation*}\begin{aligned}
&\Delta_X
\Big(-\Delta_X+\sigma^2+\Big(\frac{n-2k-1}{2}\Big)^2\Big)^{-1}\\
&=-\Id+\Big(\sigma^2+\Big(\frac{n-2k-1}{2}\Big)^2\Big) \Big(-\Delta_X+\sigma^2+\Big(\frac{n-2k-1}{2}\Big)^2\Big)^{-1},
\end{aligned}\end{equation*}
and thus an infinite rank pole is allowed at points where the analytic
continuation of $\lambda\mapsto\lambda$ vanishes
(note that $\lambda=\sigma^2+\Big( \frac{n-2k-1}{2}\Big)^2$ is the
spectral parameter in the above formula.)
We write
$$
\cR_X(\sigma)
$$
for the meromorphic continuation of
$\Big(-\Delta_X+\sigma^2+\Big(\frac{n-2k-1}{2}\Big)^2\Big)^{-1}$.

We
remark that with slightly more work the `cross terms', i.e.\ $\delta_X
d_X$ with normal forms and $d_X\delta_X$ with tangential forms can
also be analyzed, and then the nature of the possible pole at zero can be
described more precisely, but this is not our focus here. The basic point is
that for an operator mapping between direct sums of Banach spaces,
$\cX_0\oplus\cX_1\to\cY_0\oplus \cY_1$, if $D:\cX_1\to\cY_1$ is
invertible then the invertibility of
$A-BD^{-1}C:\cX_0\to\cY_0$ and $\begin{bmatrix}
  A&B\\C&D\end{bmatrix}$ are equivalent, with
\begin{equation}\label{eq:block-matrix-inv}
\begin{bmatrix}
  A&B\\C&D\end{bmatrix}^{-1}
=\begin{bmatrix}(A-BD^{-1}C)^{-1} &-(A-BD^{-1}C)^{-1} BD^{-1}\\
-D^{-1}C (A-BD^{-1}C)^{-1} &D^{-1}+D^{-1}C (A-BD^{-1}C)^{-1}  BD^{-1}\end{bmatrix}.
\end{equation}
There is an analogous formula if the role of the two components are
interchanged. Since for top and bottom degree forms one of the two
components is trivial (as $\Lambda^{-1}X$, resp.\ $\Lambda^{n+1}X$ are
trivial), one can proceed inductively from the two extremes towards
middle degrees. Thus, for $1$-forms, for instance, one uses that one has
obtained $D_\sigma$ on $0$-forms to conclude that, provided that the
domains remain compatible, one has a meromorphic continuation for
1-forms with at most a finite rank pole at $0$ since, writing the right
hand side of \eqref{eq:block-matrix-inv} as $\begin{bmatrix}E&F\\G&H\end{bmatrix}$,
$A^{-1}=E-F(D-CA^{-1}B)G$, and $CA^{-1}B$ (recall that $B$ and $C$ are
$-2d_X$ and $2\delta_X$) can be computed using the
information already obtained above, including at $0$.

\section{Conformally compact spaces}\label{sec:conformal}
We now extend the results to general even conformally compact spaces.
That is, if $(X,g)$
is Riemannian and even asymptotically hyperbolic,
there is a product decomposition near the
boundary $Y_{{y}}$ of $\overline{X}$
such that
$$
g=\frac{dx^2+\tilde h(x,{y},d{y})}{x^2},
$$
with $\tilde h$ even in $x$, i.e.\ $\tilde h=h(x^2,{y},d{y})$,
with $h$ smooth. We write $\overline{X}_{\even}$ for $\overline{X}$
with the new smooth structure in which $\mu=x^2$ is a boundary
defining function. We consider $h$ as a symmetric 2-cotensor on $Y$ valued
function on $\overline{X}_{\even}$ defined near $Y$.
Before considering the appropriate extension of an operator related to
the spectral family of $\Delta_X$ across $\pa X$, we first discuss
$d_X\delta_X$ and $\delta_X d_X$ in some detail.

\begin{lemma}\label{lemma:d-del-even}
Suppose that $(X,g)$ is equipped with an even asymptotically
hyperbolic metric, with $\overline{X}_{\even}$ being the
compactification equipped with the even smooth structure. Then
$$
\delta_X d_X,d_X\delta_X\in\Diffb^2(\overline{X}_{\even};\Lambda
\overline{X}_{\even}).
$$
Further, with $d\mu\wedge$ denoting the operator of wedge product with
$d\mu$,
$$
(d\mu\wedge)d_X\delta_X,\delta_X d_X(d\mu\wedge)\in\mu\Diffb^2(\overline{X}_{\even};\Lambda
\overline{X}_{\even}).
$$
\end{lemma}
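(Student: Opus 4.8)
The plan is to work in the even boundary coordinates $(\mu,y)$ with $\mu=x^2$, so that near $Y$ the metric is $g=\frac{d\mu^2}{4\mu^2\,\mu}+\frac{h(\mu,y,dy)}{\mu}$ (using $dx=\frac{d\mu}{2\sqrt\mu}$), and to exploit the fact that the induced identifications of $\Lambda\overline X$ with $\Lambda\overline X_{\even}$ differ from the naive one by smooth (even) bundle endomorphisms. First I would write $d_X$ explicitly: on a $k$-form $u=u_T+\frac{dx}{x}\wedge u_N$ decomposed into tangential and normal parts relative to $x$, one has, schematically, $d_X u = d_Y u_T \mp \frac{dx}{x}\wedge(x\partial_x u_T - d_Y u_N)$ up to lower-order zeroth order terms coming from the weight $x^{-?}$. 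Changing to $\mu=x^2$ turns $x\partial_x$ into $2\mu\partial_\mu$, which is a b-vector field on $\overline X_{\even}$, and turns $\frac{dx}{x}=\frac{1}{2}\frac{d\mu}{\mu}$. So already $d_X$ maps into $\frac{d\mu}{\mu}\wedge$ times b-differential operators of order $1$. The point is then that $\delta_X$, being (up to smooth, even bundle maps coming from the metric coefficients $h$ and $\sqrt{\det h}$) the adjoint of $d_X$ with respect to the volume density $|dg|$, has the analogous structure: the contraction with $\frac{d\mu}{\mu}$ is the operation dual to $\frac{d\mu}{\mu}\wedge$, and the $x\partial_x$-derivative again becomes $2\mu\partial_\mu$.

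The key observation making the factors of $\mu$ work out is that the Riemannian volume density $|dg| = x^{-n}\sqrt{\det h}\,|dx\,dy| = c(\mu,y)\mu^{-(n+1)/2}|d\mu\,dy|$ with $c$ smooth and even, while the fiber inner product on $k$-forms induced by $g$ scales like $x^{2k}=\mu^k$ on purely tangential forms and like $\mu^{k-1}$ on forms with a $\frac{d\mu}{\mu}$ factor. Consequently, when one computes $\delta_X = \pm *\,d_X\,*$ (or directly via the adjoint formula), each factor $\frac{dx}{x}\wedge$ in $d_X$ is matched, in $\delta_X$, by a contraction; and the mismatch in homogeneity between the two summands of $\Lambda^k$ produces exactly one net power of $\mu$ when a normal form is mapped to a tangential form (the component of $\delta_X$ that "removes" a $\frac{d\mu}{\mu}$), and conversely $d_X$ acting on a tangential form and landing in the normal part also carries a $\frac{d\mu}{\mu}$, i.e. one can extract a $\mu$ after composing with $d\mu\wedge$. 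Composing $d_X$ and $\delta_X$ to form $\delta_X d_X$ and $d_X\delta_X$, I would track the four block entries (tangential-to-tangential, tangential-to-normal, normal-to-tangential, normal-to-normal) and check in each that the total number of $\frac{d\mu}{\mu}\wedge$ and contraction operations is even, so the apparent singularities cancel in pairs and one is left with a b-operator; this gives the first assertion. For the second assertion I would post-compose (resp.\ pre-compose) with $d\mu\wedge$: this wedge operation kills the only potentially-singular block of $d_X\delta_X$ (the one whose output is purely tangential, where the single $\frac{1}{\mu}$ lives) and simultaneously supplies, together with the $\frac{d\mu}{\mu}$ already present, a genuine $\mu$; similarly for $\delta_X d_X(d\mu\wedge)$.

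I would organize the computation so as to reduce to a normal-coordinate model: choose $h$ to be $y$-independent to first order, absorb the $\sqrt{\det h}$ and the endomorphism relating the two bundle decompositions into smooth even conjugating factors (which preserve both $\Diffb^2$ and $\mu\Diffb^2$, since $\mu^{\pm\alpha}$-conjugation of $\mu\partial_\mu$ only shifts by a constant, as recorded in the excerpt via $\mu^{\alpha/2}(\mu\partial_\mu)\mu^{-\alpha/2}=\mu\partial_\mu-\alpha/2$), and then verify the claim for the model warped-product operator $\frac{d\mu^2/(4\mu)+h(y,dy)}{\mu}$ by direct expansion using Cheeger-type formulas for the Laplacian on a (conic-in-$x$) warped product. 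The main obstacle, I expect, is bookkeeping rather than ideas: correctly keeping track of the weights $x^{\pm\text{power}}$ in the definition of $d_X$ on the rescaled components $u_N = $ (power of $x$) $\tilde u_N$, and making sure the parity (evenness in $x$, equivalently smoothness in $\mu$) survives every contraction — in particular that no stray odd power of $x$, i.e. odd power of $\sqrt\mu$, appears in the coefficients. A clean way to control parity is to note that $d_X$ intertwines with the fiberwise reflection $x\mapsto -x$ up to the bundle map that is $-1$ on the $\frac{dx}{x}$-factor, so $\delta_X d_X$ and $d_X\delta_X$ commute with a genuine involution on $\Lambda\overline X_{\even}$, forcing their coefficients to be even; combined with the order and the $\mu$-factor count above, this yields membership in $\Diffb^2$, respectively $\mu\Diffb^2$.
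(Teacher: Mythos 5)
Your approach is essentially the same as the paper's: write $d_X$ and $\delta_X$ explicitly in a conormal/tangential decomposition near the boundary in the $\mu=x^2$ coordinates, compute the compositions as $2\times 2$ block matrices, and track factors of $\mu$. The paper uses the basis $dy^\alpha,\ d\mu\wedge dy^\beta$ (an honest smooth trivialization of $\Lambda\overline{X}_{\even}$), computes $\delta_X=G_{k-1}^{-1}d^*_{\mathrm{base}}G_k$ directly, and reads off membership in $\Diffb^2$, resp.\ $\mu\Diffb^2$, from the resulting entries; you propose the equivalent computation using the $\frac{d\mu}{\mu}$-normalized basis and/or $*\,d_X\,*$, plus a parity argument and a warped-product model reduction as supplements. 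Those supplements are not needed in the paper but are harmless, and the parity argument alone is not sufficient: evenness of coefficients in $\mu$ does not by itself give the b-structure (e.g.\ $\partial_\mu$ has smooth, even coefficients yet is not in $\Diffb^1$), so the explicit $\mu$-factor bookkeeping is still the substance of the proof, as you yourself note.

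There is, however, a genuine directional and conceptual error in your treatment of the second assertion. You write that $(d\mu\wedge)$ ``kills the only potentially-singular block of $d_X\delta_X$ (the one whose output is purely tangential, where the single $\frac{1}{\mu}$ lives).'' This is wrong in two ways. First, by the first assertion $d_X\delta_X$ has \emph{no} singular blocks; the second assertion is about extracting a factor of $\mu$ (vanishing), not about cancelling a $1/\mu$ (singularity). Second, $(d\mu\wedge)$ applied to the output \emph{keeps} the tangential-output row (it becomes the $d\mu\wedge$ part of the resulting $(k+1)$-form) and \emph{kills} the normal-output row. In the paper's basis the surviving top row of $d_{X,k-1}\delta_{X,k}$ is $\begin{bmatrix}\mu d_Y\delta_Y & d_Y(-4\mu^2\partial_\mu+2\mu(n-2k-1)+\mu^2\gamma)\end{bmatrix}$, with an extractable $\mu$, while the killed bottom row contains the term $\partial_\mu\mu\delta_Y=(\mu\partial_\mu+1)\delta_Y$, which is in $\Diffb^2$ but has \emph{no} extractable $\mu$. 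Had you carried out the block computation you would have caught this, but as written the stated reason is backwards. (There is also a typo in your metric formula: it should read $g=\frac{d\mu^2}{4\mu^2}+\frac{h(\mu,y,dy)}{\mu}$, not $\frac{d\mu^2}{4\mu^2\mu}+\cdots$; your later model form $\frac{d\mu^2/(4\mu)+h}{\mu}$ is correct.)
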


\begin{proof}
With $\mu=x^2$, we use a conormal vs. tangential decomposition of
forms near $Y$ on $\overline{X}_{\even}$, i.e.\ we write $k$-forms as
linear combinations of
$$
dy^\alpha,\ d\mu\wedge dy^\beta,\ |\alpha|=k,\ |\beta|=k-1.
$$
In this basis, $d_X$ has the form
$$
d_X=\begin{bmatrix}d_Y&0\\\pa_\mu&-d_Y\end{bmatrix},
$$
while $g$ has the form $g=\frac{d\mu^2}{4\mu^2}+\frac{h}{\mu}$, so the
dual metric is $G=4\mu^2\pa_\mu^2+\mu H$, where $H$ is the dual metric
of $h$. Correspondingly,
$$
|dg|=\frac{1}{2\mu^{(n+1)/2}}\,d\mu\,dh=\frac{\sqrt{\det h}}{2\mu^{(n+1)/2}}\,d\mu\,dh,
$$
and on $k$-forms the dual metric is
$$
G_k=\begin{bmatrix}\mu^k H_k&0\\0&4\mu^{k+1}H_{k-1}\end{bmatrix},
$$
where $H_k$ is the dual metric of $h$ on boundary $k$-forms. We
compute $\delta_X$ as
$\delta_X=G_{k-1}^{-1} d_{\mathrm{base}}^*G_k$, $d_{\mathrm{base}}$ being the
adjoint of $d_X$ where the Euclidean inner product is used in the
fibers of $T^*X$ via a local trivialization, but the metric density
$|dg|$ is used to integrate, i.e.\ $d_{\mathrm{base}}^*=(\det
g)^{-1/2}\delta_{\RR^n}(\det g)^{1/2}$. This gives
\begin{equation*}\begin{aligned}
&\delta_{X,k}=\begin{bmatrix}\mu\delta_Y&-4\mu^2\pa_\mu+2\mu(n-2k-1)+\mu^2\gamma\\
0&-\mu\delta_Y\end{bmatrix},\\
& \gamma=-4H_{k-1}^{-1}(\det h)^{-1/2}\pa_\mu
(\det h)^{1/2} H_{k-1}.
\end{aligned}\end{equation*}
This yields
\begin{equation*}\begin{aligned}
&d_{X,k-1}\delta_{X,k}=\begin{bmatrix}\mu
  d_Y\delta_Y&d_Y(-4\mu^2\pa_\mu+2\mu(n-2k-1)+\mu^2\gamma)\\
  \pa_\mu\mu\delta_Y&\pa_\mu
  (-4\mu^2\pa_\mu+2\mu(n-2k-1)+\mu^2\gamma)+\mu
  d_Y\delta_Y\end{bmatrix},\\
&\delta_{X,k+1} d_{X,k}\\
&=\begin{bmatrix}\mu\delta_Y
  d_Y+(-4\mu^2\pa_\mu+2\mu(n-2k-3)+\mu^2\gamma)\pa_\mu&-(-4\mu^2\pa_\mu+2\mu(n-2k-3)+\mu^2\gamma)d_Y\\
-\mu\delta_Y\pa_\mu&\mu\delta_Y d_Y\end{bmatrix},
\end{aligned}\end{equation*}
which are indeed in $\Diffb^2(\overline{X}_{\even};\Lambda
\overline{X}_{\even})$. Furthermore,
\begin{equation*}\begin{aligned}
&(d\mu\wedge)d_{X,k-1}\delta_{X,k}=\begin{bmatrix}\mu
  d_Y\delta_Y&d_Y(-4\mu^2\pa_\mu+2\mu(n-2k-1)+\mu^2\gamma)\\
0&0\end{bmatrix},\\
&\delta_{X,k+1} d_{X,k}(d\mu\wedge)
=\begin{bmatrix}0&-(-4\mu^2\pa_\mu+2\mu(n-2k-3)+\mu^2\gamma)d_Y\\
0&\mu\delta_Y d_Y\end{bmatrix},
\end{aligned}\end{equation*}
which are in $\mu \Diffb^2(\overline{X}_{\even};\Lambda
\overline{X}_{\even})$, completing the proof.
\end{proof}

Note that this in particular implies that $\Delta_X\in\Diffb^2(\overline{X}_{\even};\Lambda
\overline{X}_{\even})$. In the scalar setting, more is true: after one
conjugates the spectral family, $\Delta_X-\sigma^2-(n-1)^2/4$, by the
appropriate power of $\mu$, one can factor out $\mu$ and still have
a differential operator with smooth coefficients. The appropriate
power is closely related to the asymptotic behavior of the Green's
function at $\pa X$. The diverse behavior of the form Laplacian on
different kinds of forms makes this a more difficult process in the
form valued setting. For instance, notice that one can factor $\mu$
out of $d_{X,k-1}\delta_{X,k}$ on the right, while for $\delta_{X,k+1}
d_{X,k}$ this can be done on the left -- and this ignores additional
issues from the spectral family!

We now
describe two possible ways of proceeding, with the first being an
analogue of \cite{Vasy:Microlocal-AH} but working with an extended
system (not merely extending a form bundle, but working with two copies);
we pursue the second one of these in detail, which is based on the Minkowski
space model.

The first method is as follows.
One may regard \eqref{eq:relate-Mellin} as a statement that the right
hand side, valid in $\HH^n=X$, extends to a differential operator on
$\sphere^n=\tilde X$ of the appropriate type, with smooth coefficients, acting
on two copies of the form bundle on the sphere, after
$J^{-1}F^{\imath\sigmat-2}$ is applied from the left,
$F^{-\imath\sigmat}J$ applied from the right, {\em and the smooth structure
is changed to the smooth structure corresponding to the boundary
defining function $F^2$}. In view of \eqref{eq:hyp-MT-form}, this is a
statement about the spectral family of a slightly modified version of
$\Delta_X$, incorporated into a system. This transformation only
depends on a choice of $F$, and for most purposes the only relevant
feature of $F$ is that it is a boundary defining function,
well-behaved relative to the evenness statement. That is, if $(X,g)$
is Riemannian and even, there is a product decomposition near the
boundary $Y_{{y}}$ of $\overline{X}$
such that
$$
g=\frac{dx^2+\tilde h(x,{y},d{y})}{x^2},
$$
with $\tilde h$ even in $x$, i.e.\ $\tilde h=h(x^2,{y},d{y})$.
Taking $F=x$, one modifies the system
\begin{equation}\begin{aligned}
&{\tilde P}_\sigma=\begin{bmatrix}-\Delta_X+\sigma^2+\Big(\frac{n-2k-1}{2}\Big)^2 &
-2d_X\\
2\delta_X &
-\Delta_X+\sigma^2+\Big(\frac{n-2k+3}{2}\Big)^2\end{bmatrix}\in \Diff^2(X;\Lambda^k X\oplus\Lambda^{k-1} X)
\end{aligned}\end{equation}
to the operator
\begin{equation}
{P}_\sigma|_{X_{\even}}=J^{-1}F^{\imath\sigma-(n-2k-1)/2-2}{\tilde P}_\sigma F^{-\imath\sigma+(n-2k-1)/2}J,
\end{equation}
which one now checks is the restriction of an operator ${P}_\sigma$ defined on an
extension $\tilde X$ of $\overline{X}_{\even}$ across $Y$, and satisfying the
requirements of \cite{Vasy-Dyatlov:Microlocal-Kerr} and
\cite{Vasy:Microlocal-AH}. This was checked explicitly on functions in
\cite{Vasy:Microlocal-AH}. Note that at the level of the principal
symbol, given by the dual metric {\em function} (times the identity
operator),
this means that $F^{-2}G$
extends smoothly to $T^*\tilde X$, which is automatic for an
even asymptotically hyperbolic metric.

A different way of proceeding, which we pursue instead, is via extending the metric to an
ambient metric, playing the role of the Minkowski metric, which is
homogeneous of degree $-2$. Thus, one considers
$M=\RR^+_{\rho}\times\tilde X$, as well as $\RR^+_r\times X$, with
$r=F\rho$, $F=x$, although we note that while with $F$ defined above
in the Minkowski setting, the hyperbolic metric has some higher order
(in $x$)
$dx^2$ terms in view of \eqref{eq:hyp-metric-form-F},
which however do not affect properties of the extension. On
$\RR^+_r\times X$ the analogue of the Minkowski metric is
$$
\tilde g=dr^2-r^2 g=r^2\Big(\frac{dr^2}{r^2}-g\Big)=\rho^2\Big(F^2\Big(\frac{d\rho}{\rho}+\frac{dF}{F}\Big)^2-F^2g\Big).
$$
Substituting the form of $g$ and writing $F=x$, $F^2=\mu$,
$$
\tilde g=\rho^2\Big(\mu
\frac{d\rho^2}{\rho^2}+\frac{1}{2}\Big(\frac{d\rho}{\rho}\otimes
d\mu+d\mu\otimes \frac{d\rho}{\rho}\Big)-h(\mu,{y},d{y})\Big).
$$
But now the desired extension is immediate to a neighborhood of
$\overline{X}_{\even}$ in $\tilde X$ (which is
all that is required for the analysis), by simply extending $h$
smoothly to a neighborhood.
This is easily checked to be Lorentzian (and as for this part forms
are irrelevant, there is nothing to check beyond what was done in the
scalar setting
in \cite{Vasy:Microlocal-AH}), with $d\mu$ time-like in $\mu<0$, and now the Mellin transform
gives rise to a smooth family of operators ${P}_\sigma$ on $\tilde X$, related to
${\tilde P}_\sigma$ via the same procedure as in the Minkowski
setting. (In the scalar setting, this is a special case of metrics
currently under study by Baskin, Wunsch and the author \cite{Baskin-Vasy-Wunsch:Radiation}, termed
`scattering Lorentzian metrics'.) Since the requirements for the
analysis involve the principal symbol for the Mellin transform
(including in the high energy sense), which is the same as in the
scalar setting (times the identity), namely the dual metric function
on $M$, with $\sigma$ being the Mellin-dual variable of
$\rho\pa_\rho$,
plus {\em some} bound on the
subprincipal symbol at $N^*Y$ as a bundle endomorphism (which is
automatic by the compactness of $Y$), the results of \cite{Vasy-Dyatlov:Microlocal-Kerr} and
\cite{Vasy:Microlocal-AH} are now applicable. Note that the
$\sigma$-dependence of the subprincipal symbol can be read off from
the b-principal symbol of $\Box_{\tilde g}$, so the issue is finding a
$\sigma$-independent constant (which, again, at most shifts by a
constant what spaces should be used).

However, it is actually instructive to compute the subprincipal symbol
at $N^*Y$. It turns out that this is a scalar bundle map on $\Lambda^k
\tilde X\oplus\Lambda^{k-1} \tilde X$. First, the dual metric of $\tilde g$ is
$$
\tilde
G=\frac{2}{\rho}(\pa_\mu\otimes\pa_\rho+\pa_\rho\otimes\pa_\mu)-\frac{4\mu}{\rho^2}\pa_\mu^2-\rho^{-2}
H,
$$
with $H$ the dual metric of $h$, and the metric density is
$$
|d\tilde g|=\frac{\rho^n}{2}\,d\rho\,d\mu\,|dh|
=\frac{\rho^n}{2}\,\sqrt{|\det h|}\,d\rho\,d\mu\,dy.
$$
Next, writing
$k$-forms on $M$ as
linear combinations of
$$
dy^{\alpha},\ d\mu\wedge dy^\beta,\ d\rho\wedge
dy^\gamma,\ d\rho\wedge d\mu\wedge dy^\delta,\
|\alpha|=k,\ |\beta|=k-1=|\gamma|,\ |\delta|=k-2,
$$
one obtains that on $k$-forms
$$
d=\begin{bmatrix} d_Y&0&0&0\\\pa_\mu&-d_Y&0&0\\
\pa_\rho&0&-d_Y&0\\0&\pa_\rho&-\pa_\mu&d_Y\end{bmatrix}
$$
and thus, using the expression for $d$ on $k-1$-forms to compute its
adjoint on $k$-forms,
\begin{equation}\label{eq:delta-k}
\delta_k=\rho^{-2}\begin{bmatrix}\delta_Y&-4(\pa_{\mu})_h^*+2\rho(-\pa_\rho-\frac{n-2k+1}{\rho})&2\rho(\pa_\mu)^*_h&0\\0&-\delta_Y&0&2\rho(\pa_\mu)^*_h\\
0&0&-\delta_Y&4\mu(\pa_\mu)^*_h-2\rho(-\pa_\rho-\frac{n-2k+2}{\rho})\\0&0&0&\delta_Y\end{bmatrix},
\end{equation}
where
$$
(\pa_\mu)^*_h=-\frac{1}{\sqrt{\det h}} h\pa_\mu \sqrt{\det h} H.
$$
This computation is analogous to the computation of $d_X\delta_X$ and
$\delta_X d_X$ above, but is more complicated as one needs to work
with a four-by-four system. It can again be done in steps, first computing the adjoint
$$
d^*_{\mathrm{base}}=\rho^{-n}(\det h)^{-1/2}\delta_{\RR^n}(\det
h)^{1/2}\rho^n
$$
of
$d$ relative to the Euclidean inner product on the fibers of $T^*M$ in
local coordinates $(\rho,\mu,y)$ but with the actual metric density
in the base, which is straightforward, and then computing
$\delta=\tilde G^{-1}d^*_{\mathrm{base}}\tilde G$, where $\tilde G$
also stands for the dual metric on the form bundle, which is a block
matrix of the form
$$
\tilde G_k=
\begin{bmatrix} \rho^{-2k}(-H)_k&0&0&0\\
0&-\frac{4\mu}{\rho^2}\rho^{-2(k-1)}(-H)_{k-1}&\frac{2}{\rho}\rho^{-2(k-1)} (-H)_{k-1}&0\\
0&\frac{2}{\rho}\rho^{-2(k-1)} (-H)_{k-1}&0&0\\0&0&0&-\frac{4}{\rho^2}\rho^{-2(k-2)}(-H)_{k-2}\end{bmatrix}
$$
on $k$-forms, where $(-H)_j$ is the inner product induced by $-H$ on
$j$-forms on $Y=\pa X$. Note that the $k$-dependent powers of $\rho$
arise from the degree of the form in the $y$-variables. Thus,
$\delta_k=\tilde G_{k-1}^{-1}d^*_{\mathrm{base}}\tilde G_k$ gives rise
to \eqref{eq:delta-k}.

Now one can compute $\Delta_k=d_{k-1}\delta_k+\delta_{k+1}d_k$ in a
straightforward, if computationally slightly messy, manner. To state
the result of the computation, it is convenient to rewrite
$k$-forms on $M$ as
linear combinations of
$$
dy^{\alpha},\ d\mu\wedge dy^\beta,\ \frac{d\rho}{\rho}\wedge
dy^\gamma,\ \frac{d\rho}{\rho}\wedge d\mu\wedge dy^\delta,\
|\alpha|=k,\ |\beta|=k-1=|\gamma|,\ |\delta|=k-2.
$$
Then one obtains that, with $\Vb(\tilde X;Y)$ denoting set of vector
fields on $\tilde X$ tangent to $Y$, $\Diffb^m(\tilde X;Y)$ denoting
finite products up to $m$ factors of these, and $\Diffb^m(\tilde
X;Y;E)$ the corresponding operators acting on sections of a vector
bundle $E$ on $\tilde X$ (with the action defined via trivialization
as matrices of scalar operators),
\begin{equation*}\begin{aligned}
&\cM_\rho\rho^2\Box_{\tilde
  g}\cM_\rho^{-1}=(4\pa_\mu\mu\pa_\mu-4(\imath\tilde\sigma+(n-2k-1)/2)\pa_\mu)\otimes\Id+\tilde
Q,\\
&\qquad \tilde Q\in \Diffb^2(\tilde X;Y;\Lambda^k\tilde X\oplus\Lambda^{k-1}\tilde X),
\end{aligned}\end{equation*} 
or
\begin{equation*}\begin{aligned}
{P}_\sigma&=\cM_\rho\rho^2\rho^{(n-2k-1)/2}\Box_{\tilde
  g}\rho^{-(n-2k-1)/2}\cM_\rho^{-1}\\
&=(4\pa_\mu\mu\pa_\mu-4\imath\sigma\pa_\mu)\otimes\Id+Q,
\ Q\in \Diffb^2(\tilde X;Y;\Lambda^k\tilde X\oplus\Lambda^{k-1}\tilde X).
\end{aligned}\end{equation*}
This means that the spaces for Fredholm analysis, briefly recalled
below from \cite{Vasy:Microlocal-AH}, are
\begin{equation*}\begin{aligned}
&{P}_\sigma:\cX^s\to\cY^{s-1},\\
&\cX^s=\{u\in H^s(\tilde
X;\Lambda^k\tilde X\oplus \Lambda^{k-1}\tilde X):\ {P}_\sigma u\in
H^{s-1}(\tilde
X;\Lambda^k\tilde X\oplus \Lambda^{k-1}\tilde X)\},\\
&\cY^{s-1}=H^{s-1}(\tilde
X;\Lambda^k\tilde X\oplus \Lambda^{k-1}\tilde X),\ s>-\im\sigma+1/2,
\end{aligned}\end{equation*}
and elements of the distributional kernel of ${P}_\sigma$ behave
as $(\mu\pm \imath 0)^{\imath\sigma}$; these are just outside the space
$\cX^s$ when $s>-\im\sigma+1/2$. As mentioned before, $P_\sigma^{-1}$,
or rather $(P_\sigma-\imath Q_\sigma)^{-1}$, where $Q_\sigma$ is the
complex absorbing operator,
is related to the resolvent family of $\Delta_X$ via the same
procedure as in the
Minkowski setting; no special properties of the Minkowski metric were
used in the proof of \eqref{eq:form-res-connection}-\eqref{eq:form-res-connection-2}.

\section{Analysis}\label{sec:analysis}
We finally recall the analytic set-up from \cite{Vasy-Dyatlov:Microlocal-Kerr} and
\cite{Vasy:Microlocal-AH} to complete the picture.
Here we ${P}_\sigma$ is exactly the family of operators constructed in
the previous section.

The key part is estimates in a strip
$|\im\sigma|<C$, which means that even in the large parameter sense
the principal symbol of $P_\sigma$ is a real scalar. More precisely,
the general setup, satisfied by our operator $P_\sigma$,
is that $P_\sigma$, of order $m$, has real scalar principal
symbol even with $\sigma$ as a large parameter (even if $\sigma$ is
complex but is in a strip, in the principal symbol sense it may be
regarded real, and we often do so for convenience),
i.e.\ the principal symbol is $p_\sigma \Id$, with $p_\sigma$ real
valued. The classical principal symbol (without $\sigma$ as a large
parameter) is denoted by $p$, and is assumed to be independent of
$\sigma$.
It is convenient to rescale the problem to a semiclassical one
for large parameter issues, i.e.\ consider
$P_{\semi,z}=h^2P_{h^{-1}z}$, with $|\im z|<Ch$, i.e.\ at the
principal symbol level $z$ is real; the semiclassical principal symbol
is $p_{\semi,z}$.

Next, we consider the characteristic set $\Sigma$ of $p$; one assumes
that this is a union of disjoint sets $\Sigma_+,\Sigma_-$, each of
which is a union of connected components of $\Sigma$. Due to
H\"ormander's theorem \cite{Hormander:Existence}, \cite{FIOII}, one has
real principal type propagation where $p$ is not {\em radial}, i.e.\
the Hamilton vector field $\sH_p$ is not a multiple of the radial
vector field (the generator of dilations on the fibers of $T^*\tilde
X\setminus o$). One assumes (though a more general setting is
discussed in \cite{Vasy-Dyatlov:Microlocal-Kerr}; this is needed there since the
conormal bundle of the event horizon in Kerr-de Sitter space is not
radial, though it is an invariant Lagrangian submanifold) that the set
of radial points is
a union of conic Lagrangian submanifolds;
in this case under a non-degeneracy assumption it is
automatically a source or sink for the Hamilton flow within
$\Sigma_\pm$.

At radial points, the basic theorem due to Melrose in
asymptotically Euclidean scattering \cite{RBMSpec}, proved in this
generality by the author in \cite{Vasy-Dyatlov:Microlocal-Kerr}, and
refined by Haber and the author \cite{Haber-Vasy:Radial}. The result
states that if one has a solution $u$ of $P_\sigma u=f$, and $u$ possesses a
priori regularity beyond a threshold level at the radial set, then one has hyperbolic
type estimates (loss of one derivative relative to elliptic
estimates), i.e.\ $u$ is $m-1$ Sobolev orders more regular than $f$. Note that
there is no need to assume that $u$ has this $m-1$ order improved
regularity anywhere, unlike for real principal type propagation, where
one can merely propagate such estimates. On the other hand, below a
threshold level, one has the real principal type result in that
without having to assume any regularity on $u$ at the radial set, one
can propagate regularity (up to, i.e.\ below, this threshold) from a punctured
neighborhood of the radial set to the radial set, up to $m-1$ order
improved relative to $f$. Such results are local to each component of
the radial set, and indeed can be localized even within the radial
set, as shown in \cite{Haber-Vasy:Radial}. Here the threshold value is
by no means mysterious; if $P_\sigma$ is formally self-adjoint, it is
$(m-1)/2$. In general it is given by $(m-1)/2$ plus the ratio of the imaginary (or
skew-adjoint)
part of the subprincipal symbol and the Hamilton vector field applied
to the logarithm of a positive homogeneous degree one function
evaluated at the Lagrangian when the subprincipal symbol is scalar
(but possibly variable), if it is not scalar, one needs to take an
operator bound of the skew-adjoint part of the subprincipal symbol as
a self-adjoint operator. In the present case, the shift is
$-\im\sigma$; as $m=2$, this gives a threshold value of $1/2-\im\sigma$.

Finally we introduce complex absorption. This
is a pseudodifferential operator $Q_\sigma$, with real scalar
principal symbol $q$, and one considers $P_\sigma-\imath
Q_\sigma$. Here $q$ and $q_{\semi,z}$ are supported away from the
radial sets of $p$, and they are harmless in the elliptic set of $p$
and $p_{\semi,z}$. In the real
principal type region $Q$ breaks down the symmetry of the propagation
estimates (forward vs.\ backwards); for $q\geq 0$ one can propagate
estimates forwards, for $q\leq 0$ backwards.
Of course, adding $Q$ changes the operator, so we want $Q$ to be
supported outside the region we care about (such as
$\overline{X}_{\even}$ above).

Now, in order to have a Fredholm problem we need that all
bicharacteristics of $p$ in $\Sigma_\pm$ are non-trapped, i.e.\ that they
escape both in the forward and in the backward directions to locations
where they can be controlled, i.e.\ either they enter $\{q\neq 0\}$ in
finite time, or they tend to $\Lambda_\pm$. More concretely, if we
label $\Sigma_\pm$ so that
$\Lambda_+$ is a source and $\Lambda_-$ is a sink, which is the
labelling of \cite{Vasy:Microlocal-AH} (and the opposite of the
labelling of \cite{Vasy-Dyatlov:Microlocal-Kerr}) then we require that
each bicharacteristic in $\Sigma_+\setminus\Lambda_+$ tends to either $\Lambda_+$ or
enters $\{q>0\}$ in finite time in the backward direction, and enters
$\{q>0\}$ in finite time in the forward direction, while
each bicharacteristic in $\Sigma_-\setminus\Lambda_-$ tends to either $\Lambda_-$ or
enters $\{q<0\}$ in finite time in the forward direction, and enters
$\{q<0\}$ in finite time in the backward direction. (Note that in
$\{q>0\}$ and $\{q<0\}$ the requirements are automatically satisfied!)
Thus, in high regularity spaces (with $s$ bigger than a threshold) we
can propagate estimates away from $\Lambda_+\cup\Lambda_-$ (and
towards the support of the complex absorption), while in the low
regularity spaces we can proceed in the opposite direction.

Thus, if
$s$ is greater than the threshold value at
$\Lambda_+$ and $\Lambda_-$,
then one can propagate regularity and estimates from
$\Lambda_+\cup\Lambda_-$ to $\{q>0\}\cup\{q<0\}$.
For the adjoint operator under these assumptions one
has a similar result if one works with low regularity spaces, namely
if
one replaces $s$ by $-s+(m-1)$, which is
exactly the relevant space for duality arguments; one then propagates
the estimates {\em in the opposite direction}. Concretely, one has
estimates
$$
\|u\|_{H^s}\leq C(\|(P_\sigma-\imath Q_\sigma) u\|_{H^{s-m+1}}+\|u\|_{H^{-N}})
$$ 
and
$$
\|u\|_{H^{-s+m-1}}\leq C(\|(P_\sigma^*+\imath Q_\sigma^*) u\|_{H^{-s}}+\|u\|_{H^{-N'}})
$$
for appropriate $N,N'$ with compact inclusion into the spaces on the
right hand side, yielding that, with
$$
\cY^{s}=H^{s},\ \cX^s=\{u\in H^s:\ (P_\sigma-\imath Q_\sigma) u\in H^{s-m+1}\}
$$
(note that the last statement in the definition of $\cX^s$ depends on
the principal symbol of $P_\sigma-\imath Q_\sigma$ only, which is independent of
$\sigma$),
$$
P_\sigma:\cX^s\to\cY^{s-m+1},\ P_\sigma^*:\cX^{-s+m+1}\to\cY^{-s}
$$
are Fredholm. Further, if $P_\sigma-\imath Q_\sigma$
depends holomorphically on $\sigma$ (for $\sigma$ in an open subset of
$\Cx$), then
$P_\sigma-\imath Q_\sigma$ is a holomorphic Fredholm family, while
$P_\sigma^*+\imath Q_\sigma^*$ is
antiholomorphic.
Note also that if $P_\sigma-\imath Q_\sigma$ is invertible (or if simply
$u\in\cX^s$, $f\in\cY^{s-m+1}$, $P_\sigma u=f$), and $\WF(f)$ is
disjoint from $\Lambda_\pm$
then
$\WF(P_\sigma^{-1}f)$ is also disjoint from this Lagrangian. Further,
if $f$ is $\CI$, then $P_\sigma^{-1}f$ is also $\CI$.
For the adjoint, corresponding to
propagation in the opposite direction, we have
$\WF((P^*_\sigma)^{-1}f)\subset \Lambda_+\cup\Lambda_-$ when $f$
is $\CI$.

For the semiclassical problem, a natural assumption is {\em non-trapping},
i.e.\ all semiclassical bicharacteristics in $\Sigma_\pm$
apart from those in the radial sets,
in $\Sigma_{\semi,\pm}$ are required to tend to
$L_\mp\cup\{\pm q_{\semi,z}>0\}$ in the forward direction and
$L_\pm\cup\{\pm q_{\semi,z}>0\}$ in the
backward direction. Here $L_\pm$ is the image of $\Lambda_\pm$
in $S^*\tilde X$ under the quotient map, and one considers $S^*\tilde
X$ as the boundary of the radial compactification of the fibers of
$T^*\tilde X$. Under this assumption, one has non-trapping
semiclassical estimates (analogues of hyperbolic estimates, i.e.\
with a loss of $h$ relative to elliptic estimates). This in particular
proves that for small $h$ the operator is invertible (not just
Fredholm), and thus the non-semiclassical Fredholm family has a
meromorphic inverse with finite rank poles.

This completes the
analytic ingredients in the non-trapping setting,
proving Theorem~\ref{thm:main} and Corollary~\ref{cor:main}.

We refer to \cite[Section~2,
Definition~2.18]{Vasy-Dyatlov:Microlocal-Kerr} for {\em semiclassical mildly
trapping} assumptions. These roughly state that there is a compact
subset $K$ of $T^*X$ (the `trapped set'), a neighborhood $O$ of $K$ and
a convex function $F$ on $T^*X$ which is $\geq 2$ on $K$ and $1$
outside $O$, and if one adds a complex absorption
$\tilde Q_\sigma$ which
vanishes near $K$ but is elliptic outside $O$,
then $(P_{h,z}-\imath\tilde Q_{h,z})^{-1}$ satisfies polynomial
bounds, $Ch^{-\varkappa-1}$, in $\im z>-C_0$ (i.e.\ in a strip without
the semiclassical rescaling), and such that the bicharacteristics of
$p_{\semi,z}$ are non-trapped once one regards $O$ as non-trapped,
i.e.\ entering $O$ in finite time is regarded as good as entering $\{\pm q>0\}$ in
finite time. Due to the gluing construction of
\cite{Datchev-Vasy:Gluing-prop}, semiclassical mildly trapping can be
immediately be combined with the analysis developed for non-trapping
$P_\sigma$, see \cite[Theorem~2.19]{Vasy-Dyatlov:Microlocal-Kerr},
roughly by placing complex absorption near $K$ but inside $O$ to
obtain a non-trapping `exterior' model, which can be glued with the
`interior' model
$(P_{h,z}-\imath\tilde Q_{h,z})^{-1}$.

\bibliographystyle{plain}
\bibliography{sm}

\end{document}